\theoremstyle{plain} 
\newtheorem{lemma}{Lemma}
\newaliascnt{prop}{lemma}
\newtheorem{prop}[prop]{Proposition}
\newaliascnt{coro}{lemma}
\newtheorem{coro}[coro]{Corollary}
\newaliascnt{theo}{lemma}
\newtheorem{theo}[theo]{Theorem}
\newtheorem*{theo*}{Theorem}
\theoremstyle{definition}
\newaliascnt{defn}{lemma}
\theoremstyle{remark}
\newaliascnt{rem}{lemma}
\begin{document}

\title[A theorem of Hertweck on $p$-adic conjugacy]{A theorem of Hertweck on $p$-adic conjugacy of $p$-torsion units in group rings}
\author{Leo Margolis}
\address{Departamento de matem\'aticas, Facultad de matem\'aticas, Universidad de Murcia, 30100 Murcia, Spain}
\email{leo.margolis@um.es}
\thanks{This "research" was supported by a Marie Curie Individual Fellowship from EU project 705112-ZC}
\date{\today}

\keywords{Unit Group, Group Ring, Zassenhaus Conjecture, $p$-adic Conjugacy}
\subjclass[2010]{16S34, 16U60, 20C11, 20C05}

\begin{abstract} A proof of a theorem of M.~Hertweck presented during a seminar in January 2013 in Stuttgart is given. The proof is based on a preprint given to me by Hertweck. 

Let $R$ be a commutative ring, $G$ a finite group, $N$ a normal $p$-subgroup of $G$ and denote by $RG$ the group ring of $G$ over $R$. It is shown that a torsion unit $u$ in $\mathbb{Z}G$ mapping to the identity under the natural homomorphism $\mathbb{Z}G \rightarrow \mathbb{Z}G/N$ is conjugate in the unit group of $\mathbb{Z}_pG$ to an element in $N$. Here $\mathbb{Z}_p$ denotes the $p$-adic integers. The result is achieved proving a result in the context of the so-called double action formalism for group rings over $p$-adic rings. This widely generalizes a theorem of Hertweck and a related theorem by Caicedo-Margolis-del R\'io and has consequences for the study of the Zassenhaus Conjecture for integral group rings.
\end{abstract}

\maketitle

\section{Introduction}
A long standing conjecture by Zassenhaus states that for $G$ a finite group and $u$ a torsion unit in the integral group ring $\mathbb{Z}G$ there exists a unit in the rational group algebra of $G$ conjugating $u$ onto an element of the form $\pm g$ for some $g \in G$. 

A main achievement in the study of this conjecture has been Weiss' proof for nilpotent groups \cite{Weiss91}. Weiss in fact showed that the unit conjugating $u$ onto $\pm g$ can even be taken in the $p$-adic group ring $\mathbb{Z}_pG$ \cite[Corollary on page 184]{Weiss91}. Though one can in general not hope that the conjugation will always take place in the $p$-adic group ring, the smallest non-abelian case $\mathbb{Z}S_3$ provides a well-known counterexample \cite[Example 3.4]{HertweckColloq}, in some instances this is true, as for Frobenius groups and units which map to the identity when factoring out the Frobenius kernel \cite{HertweckFrobenius}.

The idea that studying $p$-adic conjugacy of units in $\mathbb{Z}G$ can give insight into the Zassenhaus Conjecture, i.e. into conjugacy questions in the rational group algebra, was introduced by Hertweck in \cite{HertweckColloq} and was a main ingredient in the results of \cite{HertweckEdinb, CyclicByAbelian}. The result presented here greatly increases the knowledge on this topic and first applications of it to the Zassenhaus Conjecture will be published soon.

Before stating the result whose proof will occupy most of this note, we need to introduce the double-action module. Let $R$ be a commutative ring, $G$ a finite group and $C$ a cyclic group generated by an element $c$. For a torsion unit $u \in RG$ of order dividing the order of $c$ define the $R(G \times C)$-module ${}_G(RG)_u$. As $R$-module this is just $RG$ and the action of $G \times C$ is defined by
\[a \cdot (g, c^i) = g^{-1}au^i \ \ \text{for} \ \ a \in {}_G(RG)_u, \ g \in G, \ i \in \mathbb{Z}. \] 
This formalism allows to compare modules associated to different torsion units in $RG$. When speaking of a $p$-adic ring we will mean a complete discrete valuation ring of characteristic $0$ with residue class field of characteristic $p$. We can now state the results.
 
\begin{prop}\label{MainProp}
Let $R$ be a $p$-adic ring with quotient field $K$ and let $u$ be a torsion unit of $p$-power order in $RG$. Assume that $u$ is conjugate to an element $x$ of $G$ in the unit group of $KG$. Then $u$ is conjugate to $x$ in the unit group of $RG$ if and only if $_G(RG)_u$ is isomorphic to a direct summand of a direct sum of copies of $_G(RG)_x$. 
\end{prop}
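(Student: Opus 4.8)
The implication ``$u$ is conjugate to $x$ in $(RG)^{\times}$ $\Rightarrow$ the module condition'' is the easy one. If $u=v^{-1}xv$ with $v\in(RG)^{\times}$, then the bijective $R$-linear map $RG\to RG$, $a\mapsto av$, carries the $R(G\times C)$-action of ${}_G(RG)_x$ onto that of ${}_G(RG)_u$: this need only be checked on the generators $(g,1)$ and $(1,c)$ of $G\times C$, where it reads $(g^{-1}a)v=g^{-1}(av)$ and $(ax)v=(av)u$, the latter being equivalent to $xv=vu$. Hence ${}_G(RG)_u\cong{}_G(RG)_x$, which is trivially a direct summand of one copy. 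For the converse it is worth recording the reverse computation at once: any $R(G\times C)$-isomorphism $\varphi\colon{}_G(RG)_x\to{}_G(RG)_u$ is, once both underlying modules are identified with $RG$ and one uses that $\varphi$ is left $RG$-linear on restriction to $G\times 1$, right multiplication by the unit $v:=\varphi(1)$; equivariance under $(1,c)$ then forces $xv=vu$. So the whole statement reduces to proving: \emph{if ${}_G(RG)_u$ is a direct summand of a direct sum of copies of ${}_G(RG)_x$ and $u$ is conjugate to $x$ in $(KG)^{\times}$, then ${}_G(RG)_u\cong{}_G(RG)_x$ as $R(G\times C)$-modules.}

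Put $M:={}_G(RG)_u$, $P:={}_G(RG)_x$, and let $q$ be the common order of $u$ and $x$, a power of $p$; replacing $C$ by $C/\langle c^{q}\rangle$ (which alters neither the modules nor the statement) we may assume $c$ has order $q$. The pivotal observation is that $P$ is a \emph{permutation} $R(G\times C)$-lattice: $G\times C$ acts on the $R$-basis $G$ of $RG$ via $h\cdot(g,c^{i})=g^{-1}hx^{i}$, transitively, with the stabiliser of $1\in G$ equal to the cyclic $p$-group $D:=\langle(x,c)\rangle$ of order $q$, so $P\cong R[(G\times C)/D]$. Consequently any direct sum of copies of $P$ is again a permutation $R(G\times C)$-lattice, and, since $1\times C$ acts freely on $(G\times C)/D$, its restriction to the central subgroup $Z:=1\times C$ is a free $RZ$-module. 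Because $M$ is a direct summand of such a module and $RZ\cong RC$ is local ($R$ being a $p$-adic ring and $C$ a $p$-group), $M|_{Z}$ is projective, hence free, of the same rank as $P|_{Z}$; so $M|_{Z}\cong P|_{Z}$. Finally, extending scalars to $K$ and running the first paragraph over $K$, the hypothesis that $u$ and $x$ are conjugate in $(KG)^{\times}$ yields $K\otimes_{R}M\cong K\otimes_{R}P$ as $K(G\times C)$-modules.

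The remaining implication is the substantive content, and I would obtain it from Weiss' rigidity theorem for $p$-adic group rings, in the form: for a finite group $\Gamma$, a normal $p$-subgroup $\Pi$ and an $R\Gamma$-lattice $L$ with $L|_{\Pi}$ a permutation $R\Pi$-lattice, $L$ is a permutation $R\Gamma$-lattice if and only if the coinvariants $L/\omega_{\Pi}L$ form a permutation $R[\Gamma/\Pi]$-lattice, where $\omega_{\Pi}$ is the augmentation ideal of $R\Pi$. Apply this with $\Gamma=G\times C$ and $\Pi=Z=1\times C$: by the previous paragraph $M|_{Z}$ is free, hence a permutation lattice, while $M/\omega_{Z}M=RG/RG(u-1)$ is, by additivity of coinvariants, a direct summand of a direct sum of copies of $RG/RG(x-1)\cong R[G/\langle x\rangle]$, a permutation $RG$-lattice, and has the same $K$-character as the latter. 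One must therefore identify $RG/RG(u-1)$ as a permutation $RG$-lattice isomorphic to $R[G/\langle x\rangle]=P/\omega_{Z}P$ — again by Weiss' theorem, descending this time along a normal $p$-subgroup of $G$ (with an induction on $|G|$), together with Krull–Schmidt for $RG$-lattices over the complete ring $R$, the rigidity of permutation lattices and comparison of $K$-characters. Granting this, $M$ is a permutation $R(G\times C)$-lattice with $M|_{Z}\cong P|_{Z}$, $M/\omega_{Z}M\cong P/\omega_{Z}P$ and $K\otimes_{R}M\cong K\otimes_{R}P$; the uniqueness aspect of Weiss' rigidity — once more combined with Krull–Schmidt and character comparison — forces $M\cong P$, and reading this isomorphism back through the first paragraph produces the unit of $RG$ conjugating $x$ to $u$.

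The step I expect to be the main obstacle is precisely this invocation of Weiss' theorem, in particular the control of the $Z$-coinvariants $RG/RG(u-1)$ as an $RG$-lattice: one must pass from the soft datum ``direct summand of a permutation lattice with the correct $K$-character'' to the hard conclusion ``isomorphic to the specific permutation lattice $R[G/\langle x\rangle]$'', and iterate this over the normal $p$-subgroup structure. This is the point at which the hypothesis that $u$ is conjugate \emph{to an element of $G$}, and not merely to some unit of $KG$, is indispensable — it is what guarantees that the relevant restrictions and coinvariants are genuine permutation lattices rather than arbitrary lattices in the correct genus — and it is where the deep $p$-adic input enters, in contrast to the purely formal module manipulations above.
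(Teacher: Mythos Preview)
Your forward implication and the structural observations about $P={}_G(RG)_x\cong R[(G\times C)/D]$ are correct, as is the reduction of the converse to showing $M\cong P$. The gap is precisely where you locate it, and it is fatal to the argument as written.

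You propose to verify the coinvariant hypothesis of Weiss' theorem---that $M/\omega_Z M\cong RG/RG(u-1)$ is a permutation $RG$-lattice isomorphic to $R[G/\langle x\rangle]$---by ``descending along a normal $p$-subgroup of $G$ with an induction on $|G|$''. But $G$ is an arbitrary finite group here; it need not possess any nontrivial normal $p$-subgroup (take $G$ simple, or any group with $O_p(G)=1$), so the induction has no base to stand on. What remains once this induction is removed is the bare assertion that an $RG$-lattice which is a direct summand of copies of $R[G/\langle x\rangle]$ and has the same $K$-character must be isomorphic to it. That is false for trivial-source modules in general: the $K$-characters of the indecomposable summands of $R[G/\langle x\rangle]$ are typically linearly dependent, so character comparison plus Krull--Schmidt does not pin down multiplicities. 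The same objection recurs at the top level: even granting that $M$ is a permutation $R(G\times C)$-lattice, two permutation lattices with equal $K$-characters need not be isomorphic over a $p$-adic ring.

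The paper's proof avoids Weiss' theorem entirely for this Proposition and does not induct on $|G|$. Instead it inducts on the order of $u$, passing from $u$ to $u^{p}$ and from $x$ to $x^{p}$ via restriction to $G\times C^{p}$. The engine is an analysis of primitive idempotents in $(RG)^{\langle x\rangle}$ relative to the trace ideal $T=\operatorname{Tr}_{\langle x^{p}\rangle}^{\langle x\rangle}(RG)$: one shows that the indecomposable summands of $P$ corresponding to idempotents in $T$ are relatively $(G\times C^{p})$-projective and contribute nothing to the partial augmentations $\varepsilon_{g^{G}}(\,\cdot\,x)$, while the remaining summands are governed by primitive idempotents of $RC_G(x)$. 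Equality of partial augmentations of $u$ and $x$ (from $KG$-conjugacy) then forces the multiplicities of the ``non-$T$'' summands in $M$ to match those in $P$; the $T$-parts are matched by the inductive hypothesis on $u^{p}$ together with a Green-indecomposability argument showing that their isomorphism type is determined by their restriction to $G\times C^{p}$. This is exactly the ``deep $p$-adic input'' you anticipate, but it enters through idempotent lifting, relative traces, and partial augmentations rather than through Weiss' rigidity.
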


Using previous results of Hertweck relying on the work of Weiss \cite{Weiss88} this implies the following. Here $\mathbb{Z}_{(G)}$ denotes the semilocalisation of $\mathbb{Z}$ at the primes dividing the order of the finite group $G$.

\begin{theo}\label{MainTheorem}
Let $G$ be a finite group and $N$ a normal $p$-subgroup of $G$. Then any torsion unit which maps to the identity under the natural homomorphism $\mathbb{Z}G \rightarrow \mathbb{Z}G/N$ is conjugate to an element of $N$ in the unit group of $\mathbb{Z}_pG$ and thus also in the unit group of $\mathbb{Z}_{(G)}G$. 
\end{theo}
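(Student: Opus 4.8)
The plan is to deduce the statement from Proposition~\ref{MainProp} applied with $R=\mathbb{Z}_p$ and $K=\mathbb{Q}_p$. What one must feed into that proposition, for a suitable $x\in N$, is that $u$ is conjugate to $x$ in $(\mathbb{Q}_pG)^\times$ and that ${}_G(\mathbb{Z}_pG)_u$ is a direct summand of a direct sum of copies of ${}_G(\mathbb{Z}_pG)_x$; these are exactly the inputs that the cited prior results of Hertweck, resting on Weiss's rigidity theorem \cite{Weiss88}, supply. Granting them, Proposition~\ref{MainProp} produces a conjugation of $u$ onto $x$ in $(\mathbb{Z}_pG)^\times$, and a local--global argument over the primes dividing $|G|$ upgrades this to $(\mathbb{Z}_{(G)}G)^\times$.

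The first thing to check is that $u$ has $p$-power order. Since $u$ maps to $1$ modulo $N$ its augmentation is $1$, so $u$ is normalized, and $u-1$ lies in the kernel $\mathfrak{a}$ of $\mathbb{Z}G\to\mathbb{Z}[G/N]$. Because $N$ is a normal $p$-subgroup, the image of $\mathfrak{a}$ in $\mathbb{F}_pG$ is a nilpotent two-sided ideal; hence the image of $u$ there is unipotent, so of $p$-power order. Writing $|u|=p^{a}m$ with $p\nmid m$, it follows that $v:=u^{p^{a}}$, which has order $m$, satisfies $v\equiv 1\pmod p$ in $\mathbb{Z}G$; its coefficient of $1$ is then $\equiv1\pmod p$, hence nonzero, so by the Berman--Higman theorem $v=1$ and $m=1$. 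Thus $|u|=p^{k}$ for some $k$, and we fix a cyclic group $C=\langle c\rangle$ of order $p^{k}$.

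It remains to supply the two inputs for Proposition~\ref{MainProp}. That $u$ is conjugate in $(\mathbb{Q}_pG)^\times$ to an element $x\in N$ comes from Hertweck's study of the partial augmentations of torsion units that are trivial modulo a normal $p$-subgroup. For the module statement, let $M:={}_G(\mathbb{Z}_pG)_u$, regarded as a $\mathbb{Z}_p[G\times C]$-lattice, and note that $N\times C$ is a normal $p$-subgroup of $G\times C$, with $N\times1$ normal in $N\times C$. One verifies the following elementary facts: $M$ restricted to $N\times1$ is free over $\mathbb{Z}_pN$, as $N$ acts on the basis $G$ by left translation; $M^{N\times1}=\widehat{N}\,\mathbb{Z}_pG$, on which $C$ acts trivially because $\widehat{N}\,\mathfrak{a}=0$; and $M^{N\times C}=\widehat{N}\,\mathbb{Z}_pG$ is, as a module over $\mathbb{Z}_p[(G\times C)/(N\times C)]=\mathbb{Z}_p[G/N]$, free of rank one, hence a permutation lattice. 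Applying Weiss's rigidity theorem \cite{Weiss88} first to $N\times1\trianglelefteq N\times C$ and then to $N\times C\trianglelefteq G\times C$ shows that $M$ is a permutation $\mathbb{Z}_p[G\times C]$-lattice; since $M$ is moreover free of rank one over $\mathbb{Z}_pG$ and free over $\mathbb{Z}_pC$ (the latter by applying Berman--Higman to the powers of $u$), comparing restrictions pins down the transitive summands of $M$ and shows that $M$ is a direct summand of a direct sum of copies of ${}_G(\mathbb{Z}_pG)_x$. Proposition~\ref{MainProp} then gives that $u$ is conjugate to $x\in N$ in $(\mathbb{Z}_pG)^\times$. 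Finally, this conjugacy makes all partial augmentations of all powers of $u$ and of $x$ coincide, so $u$ and $x$ are conjugate in $(\mathbb{Q}G)^\times$; for each prime $q\neq p$ dividing $|G|$ this yields $\mathbb{Q}_q$-conjugacy, which, as $q\nmid|x|$ makes $\mathbb{Z}_q\langle x\rangle$ a maximal order, forces conjugacy already in $(\mathbb{Z}_qG)^\times$, and gluing the resulting local isomorphisms of double-action lattices gives conjugacy in $(\mathbb{Z}_{(G)}G)^\times$.

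The main obstacle is the module input of the third step: showing that ${}_G(\mathbb{Z}_pG)_u$ is a direct summand of a direct sum of copies of the permutation lattice ${}_G(\mathbb{Z}_pG)_x$. This is where Weiss's rigidity of $p$-adic $p$-torsion is indispensable, and the delicate point is the verification of its permutation-lattice hypothesis for the restriction of ${}_G(\mathbb{Z}_pG)_u$ to the normal $p$-subgroup $N\times C$ of $G\times C$; everything after that is formal bookkeeping with permutation lattices together with the application of Proposition~\ref{MainProp}.
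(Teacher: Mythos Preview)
Your reduction to Proposition~\ref{MainProp} is exactly the paper's approach: the paper simply cites \cite[Proposition~4.2]{HertweckColloq} for the rational conjugacy to some $x\in N$, \cite[Corollary~3.2]{HertweckEdinb} for the module hypothesis on ${}_G(\mathbb{Z}_pG)_u$, and \cite[\S 2]{HertweckFrobenius} for the passage to $\mathbb{Z}_{(G)}G$, whereas you unpack the content of these references. One slip in your unpacking of the Weiss step: the theorem in \cite{Weiss88} requires the restriction to the normal $p$-subgroup to be \emph{free}, so your second application (to $N\times C\trianglelefteq G\times C$, where the first step only gives that $M|_{N\times C}$ is a permutation lattice) is not justified as written; a single application with $N\times 1\trianglelefteq G\times C$ repairs this, since $M|_{N\times 1}$ is genuinely free and you have already checked that $M^{N\times 1}=\widehat{N}\,\mathbb{Z}_pG\cong\mathbb{Z}_p[(G/N\times C)/(1\times C)]$ is a permutation module over $(G\times C)/(N\times 1)$.
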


This also emphasises the difference between studying torsion units or torsion subgroups in $\mathrm{V}(\mathbb{Z}G)$, since for subgroup it is known that such a result does not hold \cite[Example 4.1]{HertweckAnother}.

The following section introduces the basic concepts which connect torsion units and bimodules. After that we study the idempotents in $p$-adic group rings and their relation with partial augmentations and relative projectivity before proving Proposition~\ref{MainProp} and Theorem~\ref{MainTheorem} in the last section. Many of the results in the second and fourth paragraph are well known, see \cite{Thevenaz}, but we include proofs for completeness and to avoid too many assumptions on the ground ring $R$.\\

\textbf{Remark on this note:} This preprint is not intended for peer-reviewed publication, since this is not my own result. The results were publicly presented, with a sketch of the proof, by Hertweck in the Algebra Seminar of the University of Stuttgart on January 29th 2013, cf. the abstract of the talk \cite{HertweckSeminar}. A physical copy of a preprint proving the results was given to me by Hertweck in late March 2013. It is my understanding that Hertweck was planning a generalisation of these results, but I have no knowledge if this was achieved. Since Hertweck has not been known to be mathematically active since late 2013 and this result is of major interest in the study of torsion units of group rings, apart from having also a beautiful proof, I feel this way of publication as appropriate.

The proof presented here follows very closely the proof of Hertweck. The wording has been changed, a few details added, some notation changed, at places results have been made more general or more special, but all ideas and the basic structure remain.   

\section{Notation and basic facts}
Let $R$ be a commutative ring and $G$ a finite group. Denote by $\mathrm{V}(RG)$ the group of units of augmentation $1$ in $RG$, i.e. the units whose coefficients sum up to $1$. Let $\eta$ be a group homomorphism from a finite group $H$ to $\mathrm{V}(RG)$. Define a right $R(G \times H)$-module $ M = {}_1(RG)_\eta$ which is $RG$ as $R$-module and where the action of $G \times H$ is given by
\[m\cdot (g,h) = g^{-1}m\eta(h), \ \ m \in M, g \in G, h \in H. \]
Such a module is sometimes called a double-action module. Note that this module may also be viewed as an $RG^{\text{op}}\text{-} RH$-bimodule. Moreover the module $_G(RG)_u$ corresponds in this notation to $_1(RG)_\eta$ where $\eta: C \rightarrow \mathrm{V}(RG)$ maps $c$ to $u$.

The connection between double-action modules and conjugacy of subgroups in the unit group of $RG$ is given by the concept of $R$-equivalence of homomorphisms, cf. \cite{Weiss91}, \cite[Section (1.2)]{RoggenkampScottIso}. If $\sigma: H \rightarrow \mathrm{V}(RG)$ is another homomorphism then $\eta$ and $\sigma$ are called $R$-equivalent if there exists a unit $\mu$ in $RG$ such that $\eta(h) = \sigma(h)^\mu$ for all $h \in H$. Then $\eta$ and $\sigma$ are $R$-equivalent if and only if ${}_1(RG)_\eta \cong {}_1(RG)_\sigma$. More precisely such an isomorphism $\varphi$ between ${}_1(RG)_\eta$ and ${}_1(RG)_\sigma$ is given by right multiplication by the unit $\mu = \varphi(1)$ in $RG$ and this unit is the conjugating unit in the definition of $R$-equivalence.

Denote by $*$ the standard anti-involution on $RG$, i.e. the $R$-linear extension of $g^* = g^{-1}$ for $g \in G$ to $RG$. Then for $m \in M$ and $a \in RG$ we have $m \cdot a = a^*m$. Moreover if $e$ is an idempotent in $RG$ then the direct summand $RGe$ of $\operatorname{res}^{G \times H}_G M$ is projective.
The group homomorphism $\eta$ also yields a conjugation action of $H$ on $RG$. Denote the set of $H$-fixed points by
\[(RG)^H = \{m \in RG \ | \ m^{\eta(h)} = m \ \ \text{for all} \ \ h\in H \}. \]
Then $(RG)^H$ is a subring of $RG$ with the same identity. The $H$-fixed points can also be defined for $H$-invariant subspaces of $RG$.

We first collect some basic facts about the direct summands of $M$.
\begin{lemma}\label{IdemRG}
Let $R$ be a commutative ring, $H$ a finite group and $\eta: H \rightarrow \mathrm{V}(RG)$ a group homomorphism. Set $M = {}_1(RG)_{\eta}$.
\begin{itemize}
\item[a)] Any direct summand of $M$ is of the form $RGe$ for some idempotent $e$ in $(RG)^H$.
\item[b)] A direct indecomposable summand of $M$ is of the form $RGe$ for a primitive idempotent $e$ of $(RG)^H$.
\item[c)] Let $e$ be an idempotent in $(RG)^H$ and view $RGe$ as a direct summand of $M$. Then 
\[\operatorname{End}_{R(G \times H)}(RGe) \cong (eRGe)^H \cong e(RG)^He.\]	
 In particular $\operatorname{End}_{R(G\times H)}(RG) \cong (RG)^H$.
\end{itemize}
\end{lemma}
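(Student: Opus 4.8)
The plan is to reduce all three parts to a single computation --- that $\operatorname{End}_{R(G\times H)}(M)\cong(RG)^{H}$, which is the last assertion of (c) --- and then invoke the standard dictionary between direct summands of a module, idempotents of its endomorphism ring, and corner subrings. For the computation I would use the identity $m\cdot a=a^{*}m$ for $a\in RG$ recorded above. If $\varphi$ is an $R(G\times H)$-endomorphism of $M$, commutation with the action of $G\times 1$ gives $\varphi(a^{*}m)=a^{*}\varphi(m)$ for all $a,m\in RG$, and since $*$ is a bijection of $RG$ this means exactly that $\varphi$ is left $RG$-linear; hence $\varphi(m)=m\,b$ with $b:=\varphi(1)\in RG$. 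Commutation with $1\times H$ then reads $m\,\eta(h)\,b=m\,b\,\eta(h)$ for all $m$ and all $h$, i.e. $b$ commutes with every $\eta(h)$, i.e. $b\in(RG)^{H}$; and conversely every $b\in(RG)^{H}$ defines such an endomorphism $m\mapsto mb$. So $\varphi\mapsto\varphi(1)$ identifies $\operatorname{End}_{R(G\times H)}(M)$ with $(RG)^{H}$ (whether as an isomorphism or an anti-isomorphism depends on the convention for composing endomorphisms of a right module and is immaterial in what follows); the case $e=1$ of (c) is this statement for $M=RG$.

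For (a), recall that direct summands of any module correspond to idempotents of its endomorphism ring: $M=N\oplus N'$ yields the idempotent projection onto $N$, and conversely an idempotent $\pi$ yields $M=\operatorname{im}\pi\oplus\ker\pi$. Through the identification above, an idempotent of $(RG)^{H}$ is just an idempotent $e$ of $RG$ lying in $(RG)^{H}$, the corresponding endomorphism is $m\mapsto me$, and its image is $RGe$; one checks that $RGe$ is an $R(G\times H)$-submodule of $M$ --- it is obviously closed under $m\mapsto g^{-1}m$, and it is closed under $m\mapsto m\eta(h)$ because $e\eta(h)=\eta(h)e$. Hence every direct summand of $M$ is of the form $RGe$ with $e$ an idempotent of $(RG)^{H}$. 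For (b) I would add the standard fact that such an idempotent $e$ is primitive precisely when $RGe$ is indecomposable: if $e=e_{1}+e_{2}$ with $e_{1},e_{2}$ orthogonal nonzero idempotents of $(RG)^{H}$ then $RGe=RGe_{1}\oplus RGe_{2}$ is a nontrivial decomposition, and conversely any nontrivial $RGe=N_{1}\oplus N_{2}$ gives, by (a), $N_{i}=RGe_{i}$ with $e_{1},e_{2}\in(RG)^{H}$ orthogonal, nonzero, and summing to $e$ (compare the projections onto $N_{1}$, $N_{2}$ and onto $RGe$).

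For (c) with an arbitrary idempotent $e\in(RG)^{H}$, I would view $RGe$ as the summand of $M$ cut out by the idempotent $e$ of $\operatorname{End}_{R(G\times H)}(M)=(RG)^{H}$; the general identity $\operatorname{End}(\operatorname{im}\pi)\cong\pi\operatorname{End}(M)\pi$ then gives $\operatorname{End}_{R(G\times H)}(RGe)\cong e(RG)^{H}e$. It remains to see $e(RG)^{H}e=(eRGe)^{H}$: since $e\in(RG)^{H}$, for $a\in(RG)^{H}$ the element $eae$ lies in $eRGe$ and is fixed by $H$, and conversely any $H$-fixed $x\in eRGe$ satisfies $x=exe$ with $x\in(RG)^{H}$, hence $x\in e(RG)^{H}e$. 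I do not expect a genuine obstacle: once the endomorphism ring is computed the whole lemma is formal, and the only places needing attention are the side on which endomorphisms act (hence $(RG)^{H}$ versus its opposite) and the repeated appeal to $e\in(RG)^{H}$ to guarantee that the subsets written down are stable under the conjugation action of $H$ and are therefore honest $R(G\times H)$-submodules.
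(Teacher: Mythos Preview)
Your proposal is correct. The paper proceeds in a slightly different order: for (a) it writes $1=e+f$ with $e\in U$, $f\in V$ and checks directly (using $m\cdot a=a^{*}m$) that $e$ is an idempotent with $U=RGe$ and then that $e\in(RG)^{H}$; for (c) it identifies $\operatorname{End}_{RG}(RGe)\cong eRGe$ via $\varphi\mapsto e\varphi$ and then verifies the $H$-fixed condition. You instead compute $\operatorname{End}_{R(G\times H)}(M)\cong(RG)^{H}$ first and derive (a), (b), (c) from the general summand/idempotent/corner dictionary. The underlying computations are the same --- your $\varphi(1)=b$ is the paper's element $e$ when $\varphi$ is the projection --- so this is really a reorganisation rather than a different argument; your packaging has the mild advantage of making the formal nature of (a)--(c) transparent once the endomorphism ring is known, while the paper's version is a touch more concrete about why $e\in(RG)^{H}$.
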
 

\begin{proof}
\begin{itemize}
\item[a)] Let $U$ be a direct summand of $M$ and $V$ a submodule of $M$ such that $M = U \oplus V$. Let $1 = e + f$ with $e \in U$ and $f \in V$.
Then $e$ and $f$ are orthogonal idempotents since
\[e = 1\cdot e^* = e \cdot e^* + f \cdot e^* = e^2 + ef \]
where $e^2 \in U$ and $ef \in V$. So from $RG = RGe \oplus RGf$ we conclude $U = RGe$. Moreover for $h \in H$ we have
\[\eta(h)e + \eta(h)f = \eta(h) = 1 \cdot h = e \cdot h + f \cdot h. \]
So $M = U \oplus V$ implies $\eta(h)e = e \cdot h = e\eta(h)$ and hence $e \in (RG)^H$.
\item[b)] This follows from a).
\item[c)] The isomorphism $\operatorname{End}_{RG}(RGe) \cong eRGe$ is given by mapping $\varphi$ to $e\varphi$. Now let $\varphi \in \operatorname{End}_{R(G\times H)}(RGe)$ and $h \in H$. Then from
\begin{align*}
(e\varphi)\eta(h) =& (e\varphi) \cdot h = (e \cdot h)\varphi = (e\eta(h))\varphi = (\eta(h)e)\varphi = (e \cdot \eta(h)^*)\varphi \\
 =& (e\varphi) \cdot \eta(h)^* = \eta(h)(e\varphi) 
\end{align*}
we get that $e\varphi \in (eRGe)^H$.
\end{itemize}
\end{proof}

\begin{lemma}\label{IdemConj}
Let $R$ be a complete noetherian local ring, e.g. a complete discrete valuation ring. 
\begin{itemize}
\item[a)] Let $e$ be an idempotent in $(RG)^H$. Then $e$ is primitive if and only if $e(RG)^He$ is local.
\item[b)] For idempotents $e$ and $f$ in $(RG)^H$ let $\varphi: RGe \rightarrow RGf$ be an isomorphism of $R(G \times H)$-modules. Then $\varphi$ is given as the right multiplication with a unit $\mu$ in $(RG)^H$ and $e^\mu = f$.
\item[c)] There is a bijection between the isomorphism classes of indecomposable direct summands of $M$ and conjugacy classes of primitive idempotents in $(RG)^H$. This conjugacy action is understood to be taking place in the units of $(RG)^H$.
\end{itemize}
\end{lemma}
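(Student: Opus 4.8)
The plan is to reduce all three parts to one structural fact: $(RG)^{H}$, and more generally every corner ring $e(RG)^{H}e$ with $e$ an idempotent of $(RG)^{H}$, is a module-finite algebra over the complete noetherian local ring $R$, hence is \emph{semiperfect}. To see this, write $\mathfrak{m}$ for the maximal ideal of $R$ and $J$ for the Jacobson radical of $(RG)^{H}$: one has $\mathfrak{m}(RG)^{H}\subseteq J$, the quotient $(RG)^{H}/\mathfrak{m}(RG)^{H}$ is a finite-dimensional $R/\mathfrak{m}$-algebra and so is Artinian, whence $(RG)^{H}/J$ is semisimple Artinian; and idempotents lift modulo $J$, because $(RG)^{H}$ is $\mathfrak{m}$-adically complete (being module-finite over the complete ring $R$), so idempotents lift modulo $\mathfrak{m}(RG)^{H}$, and then modulo the nilpotent ideal $J/\mathfrak{m}(RG)^{H}$ by successive approximation. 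The same argument applies to $e(RG)^{H}e$. I will also use the Krull--Schmidt property that accompanies semiperfectness: a module whose endomorphism ring is semiperfect is a finite direct sum of indecomposable modules with local endomorphism rings, uniquely up to isomorphism and reordering. By Lemma~\ref{IdemRG}(c) this applies to $M={}_{1}(RG)_{\eta}$ and to each of its direct summands, whose endomorphism rings are the corner rings $e(RG)^{H}e$.

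For part a), one implication is formal and needs no assumption on $R$: if $e(RG)^{H}e$ is local, its only idempotents are $0$ and $e$, so a decomposition $e=e_{1}+e_{2}$ into nonzero orthogonal idempotents of $(RG)^{H}$ would produce the nontrivial idempotent $e_{1}=ee_{1}e\in e(RG)^{H}e$, which is impossible; hence $e$ is primitive. Conversely, if $e$ is primitive then $e(RG)^{H}e$ has no nontrivial idempotents; being semiperfect, it lifts idempotents modulo its radical, so its semisimple Artinian quotient by the radical has no nontrivial idempotents either, and is therefore a division ring. Thus $e(RG)^{H}e$ is local.

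For part b), set $S=(RG)^{H}=\operatorname{End}_{R(G\times H)}(M)$. By the description of endomorphisms in Lemma~\ref{IdemRG}(c), an isomorphism $RGe\cong RGf$ of $R(G\times H)$-modules is exactly an equivalence of the idempotents $e$ and $f$ in $S$, so $eS\cong fS$ as right $S$-modules. Now $S=eS\oplus(1-e)S=fS\oplus(1-f)S$, and since finitely generated right $S$-modules satisfy Krull--Schmidt, cancelling the isomorphic summands $eS\cong fS$ forces $(1-e)S\cong(1-f)S$; thus $1-e$ and $1-f$ are equivalent as well, and idempotents with equivalent complements are conjugate, so $e$ and $f$ are conjugate in $S^{\times}$. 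To exhibit $\varphi$ itself as a right multiplication, choose an isomorphism $\psi: RG(1-e)\to RG(1-f)$ of $R(G\times H)$-modules and form the automorphism $\Phi=\varphi\oplus\psi$ of $M=RG$. By Lemma~\ref{IdemRG}(c) applied with the idempotent $1$ and the explicit form of its isomorphism, $\Phi$ is right multiplication by a unit $\mu\in(RG)^{H}$, so restricting to $RGe$ gives $\varphi(x)=x\mu$ for all $x\in RGe$. Finally, $RGe\cdot\mu=RGf$ gives $e\mu=e\mu f$, while $RG(1-e)\cdot\mu=RG(1-f)$ gives $\mu f=e\mu f$; combining these, $e\mu=\mu f$, i.e. $e^{\mu}=f$.

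Part c) is then immediate. By Lemma~\ref{IdemRG}(b) the indecomposable direct summands of $M$ are exactly the modules $RGe$ with $e$ a primitive idempotent of $(RG)^{H}$, and conversely each such $RGe$ is indecomposable because its endomorphism ring $e(RG)^{H}e$ is local by part a). Hence $e\mapsto[RGe]$ is a surjection from the primitive idempotents of $(RG)^{H}$ onto the isomorphism classes of indecomposable direct summands of $M$, and by part b) (together with its trivial converse) it identifies two primitive idempotents exactly when they are conjugate in $((RG)^{H})^{\times}$; this gives the asserted bijection. The step I expect to be the crux is the use of Krull--Schmidt in part b): equivalent idempotents need not be conjugate over an arbitrary ring, and it is precisely the completeness of $R$ — channelled through semiperfectness of $(RG)^{H}$ and the resulting cancellation — that forces the complements $RG(1-e)$ and $RG(1-f)$ to be isomorphic. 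Everything else is bookkeeping around Lemma~\ref{IdemRG}.
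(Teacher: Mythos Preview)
Your proof is correct and follows essentially the same route as the paper: idempotent lifting (equivalently, semiperfectness of $(RG)^{H}$) for part a), and Krull--Schmidt to cancel $RGe\cong RGf$ against $RG$ and obtain $RG(1-e)\cong RG(1-f)$ for part b), from which the conjugating unit is built. The paper's proof is terser, deferring the semiperfectness and the construction of $\mu$ to references in Curtis--Reiner, whereas you spell out why $(RG)^{H}$ is semiperfect and how $\mu$ arises from $\Phi=\varphi\oplus\psi$; but the underlying argument is the same.
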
 
\begin{proof}
\begin{itemize}
\item[a)] By assumption a lifting theorem for idempotents is available \cite[(30.4) Theorem]{CR} implying the claim by Lemma~\ref{IdemRG}.
\item[b)] We have 
\[RGe \oplus RG(1-e) = RG = RGf \oplus RG(f-1).\]
Since the Krull-Schmidt Theorem \cite[(30.6) Theorem]{CR} holds we obtain an isomorphism between $RG(1-e)$ and $RG(1-f)$ and the claim follows from \cite[Exercises 14 in §6]{CR}. 
\item[c)] This is a direct consequence of b) and Lemma~\ref{IdemRG}.
\end{itemize}
\end{proof}

A fundamental notion when studying conjugacy of units are the so called partial augmentations. Let $a = \sum_{g \in G} r_gg$ be an element in $RG$ and denote by $g^G$ the conjugacy class of an element $g$ in $G$. Then $\varepsilon_{g^G}(a) = \sum_{h \in g^G} r_h$ is called the partial augmentation of $a$ at $g^G$. Then $\varepsilon_{g^G}$ is a trace function, i.e. $\varepsilon_{g^G}(ab) = \varepsilon_{g^G}(ba)$ for any $a,b \in RG$ \cite[Lemma (7.2)]{SehgalBook}. Note that $\varepsilon_{g^G}(a)$ is frequently denoted $\varepsilon_g(a)$, but we prefer the first notation since we will be distinguishing between conjugacy in $G$ and subgroups of $G$.

We will also need the character associated to the double-action module $M = {}_1(RG)_{\eta}$. Since $M$ is free as $R$-module we can associate to it a character $\theta$. Fixing $G$ as a basis of $M$ an easy calculation \cite[Lemma (38.12)]{SehgalBook} then shows that for any $(g,h) \in G \times H$ we have
\[\theta((g,h)) = |C_G(g)|\varepsilon_{g^G}(\eta(h)). \]
For an idempotent $e$ in $(RG)^H$ let $\theta_e$ be the character associated to the direct summand $RGe$ of $M$. Since $e\eta(H)$ annihilates $RG(1-e)$ while acting as $\eta(H)$ on $RGe$ we obtain
\begin{align}\label{theta_egh}
\theta_e((g,h)) = |C_G(g)|\varepsilon_{g^G}(e\eta(h)).
\end{align}
In particular for $H = 1$ we obtain the characters of the right projective $RG$-lattices:
\begin{align}\label{theta_e}
\theta_e(g) = |C_G(g)|\varepsilon_{g^G}(e). 
\end{align}

\section{Idempotents and partial augmentations}
For a commutative ring $R$ define $[RG,RG]$ to be the additive commutator of $RG$, i.e. the $R$-linear span of all elements of the form $[a,b] = ab - ba$ for $a,b \in RG$. Then $[RG,RG]$ consists exactly of those elements $a$ of $RG$ satisfying $\varepsilon_{g^G}(a) = 0$ for all $g \in G$ \cite[Lemma (7.2)]{SehgalBook}. Note that for $R$ of characteristic $0$ an element $a \in [RG,RG]$ satisfies $\chi(a) = 0$ for any ordinary character of $G$. Moreover for $a \in RG$ denote by $\text{supp}(a)$ the support of $a$, i.e. the set of elements of $G$ having non-vanishing coefficient in $a$.

We first consider linear independence modulo $[RG,RG]$ of idempotents in $RG$.

\begin{lemma}\label{LinInd}
Let $(K,R,k)$ be a $p$-modular system and let $e_1,...,e_n$ be representatives of the conjugacy classes of primitive idempotents in $RG$. If $\lambda_1,...,\lambda_n$ are elements in $R$ such that $\lambda_1e_1 + ... + \lambda_ne_n \in [RG,RG]$ then $\lambda_1 = ... = \lambda_n = 0$.
\end{lemma}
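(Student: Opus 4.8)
The plan is to reinterpret the hypothesis as a linear relation among characters of projective lattices and then reduce it to the nonsingularity of the Cartan matrix of $kG$. By the description of $[RG,RG]$ recalled above, the assumption $\lambda_1 e_1 + \dots + \lambda_n e_n \in [RG,RG]$ means precisely that $\sum_{i=1}^{n} \lambda_i\,\varepsilon_{g^G}(e_i) = 0$ for every $g \in G$; since $|C_G(g)|$ is a nonzero element of the characteristic-$0$ field $K$, formula \eqref{theta_e} shows this is equivalent to $\sum_{i=1}^{n} \lambda_i\,\theta_{e_i} = 0$ as a $K$-valued class function on $G$, where $\theta_{e_i}$ is the character of the $RG$-lattice $RGe_i$. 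Hence it suffices to prove that $\theta_{e_1}, \dots, \theta_{e_n}$ are linearly independent over $K$ (a fortiori over $R$).

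To prove this I would pass to the residue field $k$. Each $RGe_i$ is a direct summand of the free $RG$-module $RG$, hence a projective $RG$-lattice; consequently $\theta_{e_i}$ vanishes on the $p$-singular classes of $G$, and its restriction to the $p$-regular classes coincides with the Brauer character $\varphi_i$ of the reduction of $RGe_i$ modulo the maximal ideal of $R$. Since $R$ is complete, idempotents lift along $RG \to kG$, and together with the case $H = 1$ of Lemma~\ref{IdemConj} this shows that reduction induces a bijection between the conjugacy classes of primitive idempotents of $RG$ and the isomorphism classes of projective indecomposable $kG$-modules; in particular $\varphi_1, \dots, \varphi_n$ are exactly the Brauer characters of these modules, and $n$ equals the number of simple $kG$-modules. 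Restricting the relation $\sum_i \lambda_i \theta_{e_i} = 0$ to the $p$-regular classes gives $\sum_i \lambda_i \varphi_i = 0$, so the lemma is reduced to the linear independence of the $\varphi_i$.

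This last statement is a standard fact of modular representation theory: writing $\varphi_i = \sum_j c_{ij}\beta_j$ with $\beta_1, \dots, \beta_n$ the irreducible Brauer characters of $kG$ and $C = (c_{ij})$ the Cartan matrix, it follows from the linear independence of the $\beta_j$ together with the nonsingularity of $C$, both of which hold even when $k$ is not a splitting field \cite{CR}. Therefore $\lambda_1 = \dots = \lambda_n = 0$. I expect the only delicate points to be these standard modular inputs — the identification of $\theta_{e_i}$ on the $p$-regular classes with a Brauer character (equivalently, the vanishing of a projective character on $p$-singular classes) and the nonsingularity of the Cartan matrix; the remainder is a formal manipulation with partial augmentations and a short linear-algebra argument. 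An alternative that avoids Brauer characters is to pair $\sum_i \lambda_i \theta_{e_i}$ with the irreducible characters of $KG$ and note that the resulting coefficients are decomposition numbers, so that $(\lambda_1, \dots, \lambda_n)$ lies in the left kernel of the decomposition matrix, which is trivial since that matrix has full column rank.
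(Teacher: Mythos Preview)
Your argument is correct, and in fact your closing ``alternative'' is exactly the paper's proof: the paper evaluates the irreducible $K$-characters $\chi_\ell$ on $\sum_i\lambda_ie_i$, obtains $\sum_i e_{\ell,i}\lambda_i=0$ for all $\ell$, and concludes from the split injectivity of the map $e\colon K_0(RG)\to K_0(KG)$ \cite[(21.20)]{CR} that $\lambda=0$. Your main route is only a small detour through the other side of the $cde$-triangle: instead of using that $e$ has full column rank, you restrict $\sum_i\lambda_i\theta_{e_i}$ to $p$-regular classes and invoke the nonsingularity of the Cartan matrix together with the linear independence of irreducible Brauer characters. Both approaches appeal to a standard injectivity statement from modular representation theory, so the difference is one of packaging rather than of idea; the paper's version has the mild advantage of staying entirely in characteristic~$0$ and avoiding any discussion of whether Brauer characters are well behaved over a non-splitting residue field, while yours makes the connection to the PIMs of $kG$ more explicit.
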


\begin{proof}
Let $\chi_1,...,\chi_m$ be the irreducible $K$-characters of $G$ with corresponding primitive idempotents $f_1,...,f_m$ in $KG$. Let $e = (e_{i,j})$ be the homomorphism between the Grothendieck groups of projective $RG$-modules and $KG$-modules, as defined in \cite[§ 18]{CR}. Thus $e$ assigns to a projective $RG$-module, with respect to the basis $RGe_1,...,RGe_n$, its $KG$ composition factors with respect to the basis $KGf_1,...,KGf_m$. So for each $i$ we have $K \otimes_R RGe_i \cong \sum_{j=1}^m e_{j,i} KGf_j.$ Now $\chi_\ell(f_j) = 0$ for $j \neq \ell$ and so 
\[\chi_\ell(e_i) = \chi_\ell\left(\sum_{j=1}^m e_{j,i} f_j\right) = e_{\ell, j}\chi_\ell(f_\ell).\]

From the assumption that $\sum_{i=1}^m \lambda_i e_i \in RG$ we know $\chi_\ell(\sum_{i=1}^m \lambda_i e_i) = 0$ for any $\chi_\ell$ and so
\[0 = \begin{pmatrix} \chi_1(\sum_{i=1}^n \lambda_ie_i)/\chi_1(f_1) \\ \vdots \\ \chi_m(\sum_{i=1}^n \lambda_ie_i)/\chi_m(f_m) \end{pmatrix} = e \begin{pmatrix}
 \lambda_1 \\ \vdots \\ \lambda_n \end{pmatrix}. \]
But by \cite[(21.20) Theorem]{CR} the map $e$ is a split injection and thus the above equality implies $\lambda_1 = ... = \lambda_n = 0$.   
\end{proof}

We next proof a generalization of an observation of Külshammer \cite[Proposition 3]{Kuelshammer} using a result of Swan. For a Dedekind ring $R$ of characteristic $0$ an element $g \in G$ is called $R$-singular, if the order of $g$ is not a unit in $R$. For a prime $p$ denote by $R[G_{p'}]$ the $R$-linear span of $p$-regular elements in $G$, i.e. those elements whose order is not divisible by $p$.

\begin{prop}\label{SuppIdem}
Let $R$ be a Dedekind ring of characteristic $0$ and $e$ an idempotent in $RG$. Then $\varepsilon_{g^G}(e) = 0$ for any $R$-singular element $g \in G$. In particular for $p$ a prime not invertible $R$ we have $e \in R[G_{p'}] + [RG,RG]$.
\end{prop}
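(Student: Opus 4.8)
The plan is to reduce the statement to a trace computation over the $p$-adic completion and then invoke a theorem of Swan on the vanishing of certain Brauer characters on $p$-singular elements. First I would observe that the assertion is local in the following sense: the partial augmentation $\varepsilon_{g^G}(e)$ is an element of $R$, and since $R$ is a Dedekind ring of characteristic $0$, it suffices to check that $\varepsilon_{g^G}(e)$ vanishes after passing to every localisation $R_\mathfrak{p}$, and in fact — because a nonzero element of $R$ survives in some completion — it suffices to show $\varepsilon_{g^G}(e) = 0$ in $\hat{R}_\mathfrak{p}$ for every maximal ideal $\mathfrak{p}$. So without loss of generality I may assume $R$ is a complete discrete valuation ring, part of a $p$-modular system $(K,R,k)$, where $p$ is the residue characteristic; and the claim to prove is that $\varepsilon_{g^G}(e) = 0$ whenever $p$ divides the order of $g$ (an $R$-singular element being exactly one whose order is divisible by this $p$, after localisation).

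Next I would translate the partial augmentation into character values via \eqref{theta_e}: for the projective right $RG$-lattice $RGe$ with character $\theta_e$, one has $\theta_e(g) = |C_G(g)|\,\varepsilon_{g^G}(e)$. Writing $K \otimes_R RGe \cong \bigoplus_j d_j\, KGf_j$ as in the proof of Lemma~\ref{LinInd}, the character $\theta_e$ is a non-negative integral combination $\sum_j d_j \chi_j$ of irreducible $K$-characters; equivalently, it is the ordinary character of a projective (hence, in particular, a virtual projective) $RG$-module. The key input is then Swan's theorem: the ordinary character of a projective $RG$-module vanishes on every $p$-singular element of $G$ (see \cite[Corollary in §16 / Swan's theorem]{CR}, the statement that $g_p \neq 1$ forces $\theta_e(g) = 0$ for $\theta_e$ the character of a projective module). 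Granting this, $\theta_e(g) = 0$ for $R$-singular $g$, and since $|C_G(g)|$ is a nonzero integer hence a nonzero element of the characteristic-$0$ domain $R$, we conclude $\varepsilon_{g^G}(e) = 0$, as claimed.

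The ``in particular'' clause is now formal: an element $a \in RG$ lies in $R[G_{p'}] + [RG,RG]$ if and only if $\varepsilon_{g^G}(a) = 0$ for every $p$-singular $g \in G$ (this is the content of \cite[Lemma (7.2)]{SehgalBook} applied to the quotient $RG / (R[G_{p'}] + [RG,RG])$, whose basis is indexed by the conjugacy classes of $p$-singular elements). Since we have just shown $\varepsilon_{g^G}(e) = 0$ on exactly those classes, we get $e \in R[G_{p'}] + [RG,RG]$ after localisation at $p$; but this membership can again be checked locally, so it holds over the original Dedekind ring $R$.

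The main obstacle I anticipate is not any single computation but pinning down the precise form of Swan's theorem that applies: one must be sure it is the \emph{ordinary} (lifted) character of the projective lattice that vanishes on $p$-singular elements, and that ``$R$-singular'' for a Dedekind ring of characteristic $0$ matches up correctly with ``$p$-singular'' after localising at each relevant prime — in particular that an element whose order is a unit in $R$ contributes nothing to worry about, while the $p$-singular elements for the various residue characteristics $p$ dividing orders of torsion elements are exactly the $R$-singular ones. Handling the reduction from the Dedekind ring to a complete DVR cleanly (so that vanishing of $\varepsilon_{g^G}(e)$ in every completion forces vanishing in $R$) is the other point that needs a careful, though routine, argument.
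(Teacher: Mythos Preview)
Your approach is correct and essentially identical to the paper's: both use the character formula $\theta_e(g) = |C_G(g)|\,\varepsilon_{g^G}(e)$ from \eqref{theta_e} together with Swan's theorem that the character of a projective $RG$-lattice vanishes on $R$-singular elements. Your reduction to a complete discrete valuation ring is unnecessary, since the version of Swan's theorem the paper invokes (\cite[(32.15) Theorem]{CR}) is stated directly for Dedekind rings of characteristic~$0$, so the vanishing $\theta_e(g)=0$ and hence $\varepsilon_{g^G}(e)=0$ follow without any localisation step.
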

\begin{proof}
Let $\theta_e$ be the character associated to the projective $RG$-module $RGe$. By \eqref{theta_e} we know $\theta_e(g) = |C_G(g)|\varepsilon_{g^G}(e)$ for $g\in G$. Since $\theta_e$ vanishes on $R$-singular elements \cite[(32.15) Theorem]{CR} the result follows. 
\end{proof}

We need to establish one more connection between idempotents and partial augmentations and the following elementary observation will be quite useful.
\begin{lemma}\label{PartAugGOrH}
Let $x \in G$ be a non-trivial $p$-element. Set $H = C_G(x)$ and let $a \in R[H_{p'}]$. Then $\varepsilon_{h^G}(ax) = \varepsilon_{h^H}(ax)$ for any $h \in H$.

Moreover let $c \in [RH, RH]$ and assume that for $g \in G$ 
\[\varepsilon_{g^G}((a+c)x) = \left\{\begin{array}{ll} 1, \ \text{if} \ g \in x^G \\ 0, \ \text{otherwise.} \end{array} \right. \]
Then for $h \in H$ we have
\[\varepsilon_{h^H}(a+c) = \left\{\begin{array}{ll} 1, \ \text{if} \ h = 1 \\ 0, \ \text{otherwise.} \end{array} \right. \]
\end{lemma}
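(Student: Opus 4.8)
The plan is to build everything on one elementary observation: if $w_1,w_2\in G$ have the same $p$-part $x$ and are conjugate in $G$, then they are already conjugate in $H=C_G(x)$, because conjugation commutes with taking $p$-parts, so any $g$ with $g^{-1}w_1g=w_2$ satisfies $g^{-1}xg=x$, i.e.\ $g\in C_G(x)=H$.

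For the first assertion I would write $a=\sum_y a_y y$ with the sum over the $p$-regular $y\in H$. Each such $y$ commutes with the $p$-element $x$, so $yx$ has $p$-part $x$ and $p'$-part $y$; hence $ax=\sum_y a_y(yx)$, so that $\varepsilon_{h^G}(ax)=\sum_{y:\,yx\in h^G}a_y$ and $\varepsilon_{h^H}(ax)=\sum_{y:\,yx\in h^H}a_y$. For $h$ whose $p$-part is $x$ the opening observation applies to each summand, giving $yx\in h^G$ if and only if $yx\in h^H$, so the two sums agree. (Only this case enters the argument below, so I would record the first assertion in this slightly sharpened form.)

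For the second assertion I would first untwist by $x$: since $H$ centralizes $x$ one has $(hx)^H=h^H x$, and comparing coefficients gives $\varepsilon_{(hx)^H}(bx)=\varepsilon_{h^H}(b)$ for all $b\in RH$, so in particular $\varepsilon_{h^H}(a+c)=\varepsilon_{(hx)^H}((a+c)x)$. Next, as $\varepsilon_{g^G}$ is a trace function on $RG$ and $x$ is central in $RH$, for $u,v\in RH$ one has $\varepsilon_{g^G}(uvx)=\varepsilon_{g^G}(vxu)=\varepsilon_{g^G}(vux)$, whence $\varepsilon_{g^G}(cx)=0$ for all $g\in G$; likewise $\varepsilon_{h^H}(c)=0$. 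Now take $h\in H$. If $h$ is $p$-singular, then $\varepsilon_{h^H}(a+c)=\varepsilon_{h^H}(a)=0$ since $a$ is supported on $p$-regular elements. If $h$ is $p$-regular, then $hx$ has $p$-part $x$, so the first assertion applied to $a$ together with the reductions above yields
\[\varepsilon_{h^H}(a+c)=\varepsilon_{h^H}(a)=\varepsilon_{(hx)^H}(ax)=\varepsilon_{(hx)^G}(ax)=\varepsilon_{(hx)^G}((a+c)x),\]
which by hypothesis is $1$ if $hx\in x^G$ and $0$ otherwise; and $hx$, having $p$-part $x$ and $p'$-part $h$, is $G$-conjugate to $x$ exactly when $h=1$. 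This gives the second assertion.

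The $p$-part bookkeeping, the coefficient comparison behind $(hx)^H=h^H x$, and the cyclic rearrangement inside $\varepsilon_{g^G}$ are routine. The point that needs genuine care is the vanishing $\varepsilon_{g^G}(cx)=0$: it is precisely the centrality of $x$ in $RH$ that lets the extra factor $x$ pass through the trace identity and thereby lets one discard the commutator $c$; without it the argument breaks. Everything else follows from the opening remark that conjugacy of elements with a common $p$-part takes place inside the centralizer of that $p$-part.
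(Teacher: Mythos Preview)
Your argument is correct and follows essentially the same route as the paper: both hinge on the observation that elements of $\operatorname{supp}(ax)$ have $p$-part $x$, so $G$-conjugacy among them coincides with $H$-conjugacy, and both dispose of $c$ via the fact that $cx\in[RH,RH]$ because $x$ is central in $H$.

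One point deserves comment. You prove the first assertion only for $h\in H$ whose $p$-part equals $x$, and you flag this explicitly. That caution is well placed: the first assertion as stated (for \emph{all} $h\in H$) is in fact false. Take $G=S_4$, $p=2$, $x=(12)(34)$, $H=C_G(x)$ (dihedral of order $8$), $a=1\in R[H_{2'}]$, and $h=(13)(24)\in H$; then $\varepsilon_{h^G}(ax)=1$ while $\varepsilon_{h^H}(ax)=0$, since $x$ lies in $h^G$ but not in $h^H$. The paper's own proof likewise only establishes the equality for $h$ of the form $h_1x$ with $h_1\in H_{p'}$ (equivalently, $h$ with $p$-part $x$), together with the trivial case $h^G\cap\operatorname{supp}(ax)=\emptyset$. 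Your restricted version is exactly what is proved there and exactly what is needed for the second assertion and for the later applications, so nothing is lost.
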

\begin{proof}
If $h^G \cap \text{supp}(ax) = \emptyset$ then clearly $\varepsilon_{h^G}(ax) = \varepsilon_{h^H}(ax) = 0$. If on the other hand $h_1x$ and $h_2x$ are elements in $\text{supp}(ax)$ which are conjugate in $G$, where $h_1, h_2 \in H_{p'}$, then they have the same $p$-part, namely $x$. Thus conjugation between $h_1x$ and $h_2x$ takes place already in $H = C_G(x)$ and so $\varepsilon_{(h_1x)^G}(ax) = \varepsilon_{(h_1x)^H}(ax)$.

For the second statement observe that since $x$ lies in the centre of $H$ multiplication by $x$ is permuting the conjugacy classes in $H$ and so we have $cx \in [RH,RH]$. Thus $\varepsilon_{g^G}((a + c)x) =  \varepsilon_{g^G}(ax)$ and so we can assume $c = 0$. So by the first statement our assumption implies
\[\varepsilon_{h^H}(ax) = \left\{\begin{array}{ll} 1, \ \text{if} \ h \in x^H \\ 0, \ \text{otherwise} \end{array} \right. \]
and this implies the second statement of the lemma.
\end{proof}

Let $R$ be a $p$-adic ring and $e$ a primitive idempotent in $RG$. Then the multiplicity of $e$ is the number of conjugates of $e$ appearing in an orthogonal primitive decomposition of $1$ in $RG$. Here conjugation is understood to take place in the unit group of $RG$.

\begin{coro}\label{MultIdem}
Let $R$ be a $p$-adic ring, $x \in G$ a non-trivial $p$-element and set $H = C_G(x)$. Let $e_1,...,e_n$ be representatives of conjugacy classes of primitive idempotents in $RH$ with multiplicity $m_1,...,m_n$ respectively. Let $k_1,...,k_n$ be integers such that for any $g \in G$ we have
\[\varepsilon_{g^G}\left( \sum_{i=1}^n k_ie_ix \right) = \left\{\begin{array}{ll} 1, \ \text{if} \ g \in x^G \\ 0, \ \text{otherwise} \end{array} \right. \]
Then $k_i = m_i$ for all $1 \leq i \leq n$.
\end{coro}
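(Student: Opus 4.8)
The plan is to reduce the statement to the linear independence result of Lemma~\ref{LinInd}, applied inside the group ring $RH$, together with the translation between idempotents in $RH$ and partial augmentations provided by \eqref{theta_e}. First I would recall that $1 = \sum_{i=1}^n m_i' e_i'$ is an orthogonal primitive decomposition of $1$ in $RH$, where the $e_i'$ run over all primitive idempotents appearing; grouping conjugate idempotents together, the number of those conjugate to $e_i$ is exactly $m_i$ by definition of multiplicity. Now I would compute partial augmentations: for $h \in H$, the element $1 \in RH$ has $\varepsilon_{h^H}(1) = 1$ if $h = 1$ and $0$ otherwise. On the other hand, by \eqref{theta_e} applied in $RH$, if $e$ is conjugate to $e_i$ in $RH$ then $\varepsilon_{h^H}(e) = \varepsilon_{h^H}(e_i)$ for all $h$, since partial augmentations are trace functions (conjugation does not change them). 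Hence the decomposition $1 = \sum m_i e_i$ holds modulo $[RH,RH]$, i.e. $1 - \sum_{i=1}^n m_i e_i \in [RH,RH]$.

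Next I would use the hypothesis. Apply the second part of Lemma~\ref{PartAugGOrH} with $a = \sum_{i=1}^n k_i e_i$: strictly one should first observe that each $e_i \in RH$ decomposes, via Proposition~\ref{SuppIdem} applied over the Dedekind ring $R$ (a $p$-adic ring is such), as $e_i \in R[H_{p'}] + [RH,RH]$, so $\sum k_i e_i = a_0 + c$ with $a_0 \in R[H_{p'}]$ and $c \in [RH,RH]$. The displayed hypothesis of the corollary says precisely that $\varepsilon_{g^G}((a_0 + c)x)$ is the indicator of $x^G$, so Lemma~\ref{PartAugGOrH} yields $\varepsilon_{h^H}(a_0 + c) = \varepsilon_{h^H}\big(\sum_{i=1}^n k_i e_i\big)$ equals $1$ for $h = 1$ and $0$ otherwise; in other words $\sum_{i=1}^n k_i e_i - 1 \in [RH,RH]$.

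Combining the two congruences, $\sum_{i=1}^n (k_i - m_i) e_i \in [RH,RH]$. Finally I would invoke Lemma~\ref{LinInd} for the $p$-modular system $(K,R,k)$ attached to the $p$-adic ring $R$, with the group $H$ in place of $G$ and the representatives $e_1,\dots,e_n$ of conjugacy classes of primitive idempotents in $RH$: it forces $k_i - m_i = 0$ for every $i$, which is the assertion. I expect the only genuinely delicate point to be the bookkeeping with Lemma~\ref{PartAugGOrH} — verifying that the hypothesis on $G$-partial augmentations of $(\sum k_i e_i)x$ really does feed into that lemma after replacing $\sum k_i e_i$ by its $p$-regular part modulo commutators, and that this replacement does not disturb the conclusion because $\varepsilon_{h^H}$ annihilates $[RH,RH]$. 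Everything else is a direct concatenation of the quoted lemmas.
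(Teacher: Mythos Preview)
Your proposal is correct and follows essentially the same route as the paper: use Proposition~\ref{SuppIdem} to put $\sum k_i e_i$ into $R[H_{p'}]+[RH,RH]$, apply Lemma~\ref{PartAugGOrH} to convert the $G$-partial augmentation hypothesis into $\varepsilon_{h^H}\big(\sum k_i e_i\big)=\delta_{h,1}$, observe the same identity for $\sum m_i e_i$ via the trace property of partial augmentations, and conclude with Lemma~\ref{LinInd}. The reference to \eqref{theta_e} in your first paragraph is superfluous (you only need that $\varepsilon_{h^H}$ is a trace function), but otherwise your argument matches the paper's proof.
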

\begin{proof}
By Proposition~\ref{SuppIdem} we know $e_i \in R[H_{p'}] + [RH, RH]$ for any $1 \leq i \leq n$. So by our assumption and Lemma~\ref{PartAugGOrH} we obtain
\[\varepsilon_{h^H}\left( \sum_{i=1}^n k_ie_i \right) = \left\{\begin{array}{ll} 1, \ \text{if} \ h = 1 \\ 0, \ \text{otherwise} \end{array} \right. \]
Note that if $e$ and $f$ are conjugate elements in $RH$ then $\varepsilon_{h^H}(e) = \varepsilon_{h^H}(f)$ for any $h \in H$. So
\[\varepsilon_{h^H}\left( \sum_{i=1}^n m_ie_i \right) = \varepsilon_{h^H}(1) = \left\{\begin{array}{ll} 1, \ \text{if} \ h = 1 \\ 0, \ \text{otherwise} \end{array} \right. \]
Hence $\sum\limits_{i=1}^n (m_i - k_i)e_i \in [RH, RH]$ and thus $m_i = k_i$ for any $1 \leq i \leq n$ by Lemma~\ref{LinInd}.
\end{proof}

\section{Relative Projectivity and Idempotents}
This section is devoted to the proof of Lemma~\ref{RelProjEquiv} which, in the words of Hertweck, "seems to be more or less known" and information on it can be found e.g. in Thevenaz's book \cite{Thevenaz}. Part d) of the lemma seems however to be less well known. We will first introduce the necessary notation. Let $R$ be a commutative ring and $K$ and $H$ subgroups of $G$ such that $K \leq H$. Denote by $[K\setminus H]$ a transversal of the right cosets of $K$ in $H$. Then $(RG)^H \subseteq (RG)^K$ and we can define the relative trace map
\begin{align*}
&\operatorname{Tr}_K^H : (RG)^K \rightarrow (RG)^H \\
&\operatorname{Tr}_K^H(a) = \sum_{g \in [K\setminus H]} a^g.
\end{align*}
Abusing notation we will write the image of $\operatorname{Tr}_K^H$ as $\operatorname{Tr}_K^H(RG)$. We will need the following result which follows by lifting idempotents.

\begin{lemma}\label{IdemDecAT}
Let $R$ be a $p$-adic ring, $x \in G$ a non-trivial $p$-element and set $H = C_G(x)$. Furthermore set
\[A = (RG)^{\langle x \rangle} \ \ \text{and} \ \ T = \operatorname{Tr}_{\langle x^p \rangle}^{\langle x \rangle}(RG). \]

For $e_1,...,e_r$ a set of representatives of the conjugacy classes of primitive idempotents in $RH$ there exists a set $f_1,...,f_r,...,f_s$ of representatives of conjugacy classes of primitive idempotents in $A$ where $r \leq s$ and $f_i \in T$ for $i > r$. Moreover for $i \leq r$ the difference $e_i - f_i = \varepsilon_i$ is an idempotent of $T$ which is orthogonal to $f_i$. 
\end{lemma}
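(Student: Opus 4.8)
The plan is to realise $A=(RG)^{\langle x\rangle}$ as the centraliser $C_{RG}(x)$, to describe $RH$ and the trace ideal $T$ explicitly inside it, and then to transport primitive idempotents along the resulting ring surjection $A\twoheadrightarrow A/T\cong RH/pRH$; the primitive idempotents lying in $T$ will account for the gap $s-r$.

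First I would set up the decompositions. Since $x$ is central in $H=C_G(x)$ we have $RH\subseteq A$, and since $g\in C_G(x)\iff x^{-1}gx\in C_G(x)$, the splitting $RG=RH\oplus R[G\setminus H]$ is stable under conjugation by $x$; taking $\langle x\rangle$-fixed points gives an $R$-module decomposition $A=RH\oplus W$ with $W=(R[G\setminus H])^{\langle x\rangle}$. Next I would pin down $T$. The projection formula $\operatorname{Tr}_{\langle x^p\rangle}^{\langle x\rangle}(\alpha b\gamma)=\alpha\,\operatorname{Tr}_{\langle x^p\rangle}^{\langle x\rangle}(b)\,\gamma$ for $\alpha,\gamma\in A$ and $b\in(RG)^{\langle x^p\rangle}$ shows $T$ is a two-sided ideal of $A$, hence stable under conjugation by units of $A$; a short computation then yields $T=pRH\oplus W$. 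Indeed $W\subseteq T$ because an $\langle x\rangle$-orbit sum of an element of $G\setminus H$ — whose $\langle x\rangle$-stabiliser is contained in $\langle x^p\rangle$ — is a relative trace, by transitivity of traces; $pRH\subseteq T$ because $\operatorname{Tr}_{\langle x^p\rangle}^{\langle x\rangle}$ acts as multiplication by $p$ on $RH$; and the reverse inclusion follows by computing $\operatorname{Tr}_{\langle x^p\rangle}^{\langle x\rangle}$ componentwise on $(RG)^{\langle x^p\rangle}=RH\oplus(R[G\setminus H])^{\langle x^p\rangle}$. Granting $T=pRH\oplus W$, the $R$-linear projection $\pi\colon A\to RH/pRH$ sending $a=a_{RH}+a_W$ to $\overline{a_{RH}}$ is a surjective ring homomorphism with kernel $T$ — multiplicativity amounting to $(a_Wa_W')_{RH}\in pRH$, which holds since $a_Wa_W'\in W\cdot W\subseteq T$. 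Thus $A/T\cong RH/pRH$, compatibly with $RH\hookrightarrow A$ and the reduction $RH\twoheadrightarrow RH/pRH$; and as $R$ is complete with $p$ in its maximal ideal, $pRH\subseteq J(RH)$, idempotents lift along $RH\twoheadrightarrow RH/pRH$ with conjugacy reflected, and $A/(T+J(A))\cong RH/J(RH)$ (radicals behaving well under these surjections).

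Now I would construct the $f_i$. Decomposing $e_i$, viewed in $A$, into pairwise orthogonal primitive idempotents of $A$ and applying $\pi$ produces orthogonal idempotents summing to $\overline{e_i}$, which is a \emph{primitive} idempotent of $RH/pRH$; hence exactly one constituent $f_i$ of $e_i$ has nonzero image, the rest lie in $\ker\pi=T$, and $\varepsilon_i:=e_i-f_i$ is an idempotent of $T$ orthogonal to $f_i$ — this is the ``Moreover'' part. Applying $\pi$ to a hypothetical conjugacy $f_i^{\mu}=f_j$ with $\mu$ a unit of $A$ gives $\overline{e_i}\sim\overline{e_j}$ in $RH/pRH$, whence $e_i\sim e_j$ and $i=j$; so $f_1,\dots,f_r$ are pairwise non-conjugate primitive idempotents of $A$, none lying in $T$. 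To see these exhaust the primitive idempotent classes outside $T$ I would use the dichotomy: a primitive idempotent $g$ of $A$ lies in $T$ if and only if $T$ acts non-trivially on the simple top of $Ag$. The ``if'' is immediate, since $g\in Tg$ while $g\notin J(A)g$; the ``only if'' is a one-line Nakayama argument, namely if $Tg\not\subseteq J(A)g$ then $Tg+J(A)g=Ag$, hence $Ag=Tg\subseteq T$, contradicting $g\notin T$. Consequently the classes of primitive idempotents of $A$ with $T$ acting trivially correspond to the simple modules of $A/(T+J(A))\cong RH/J(RH)$, of which there are exactly $r$; being $r$ distinct such classes, those of $f_1,\dots,f_r$ are all of them, and choosing representatives $f_{r+1},\dots,f_s\in T$ of the remaining classes (of which there may be none, so $r\le s$) finishes the argument.

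The step I expect to be the main obstacle is the explicit ideal-theoretic analysis of the first part — proving $T=pRH\oplus W$ and the multiplicativity of $A\to RH/pRH$ — together with the Nakayama dichotomy for which primitive idempotents of $A$ lie in $T$; everything after that is bookkeeping, or the standard idempotent-lifting and Krull--Schmidt machinery over a complete local ring already recorded in Lemmas~\ref{IdemRG} and~\ref{IdemConj}.
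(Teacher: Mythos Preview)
Your proposal is correct and follows essentially the same route as the paper: both establish $A/T\cong RH/pRH$ (the paper via $A=RH+T$ and $RH\cap T=pRH$, you via the equivalent explicit decomposition $T=pRH\oplus W$) and then refine each $e_i$ into primitive idempotents of $A$, using that $\bar e_i$ remains primitive in $RH/pRH$ to isolate a unique summand $f_i\notin T$. The only noteworthy variation is in the exhaustion step: the paper starts from a \emph{full} orthogonal decomposition $1=\sum_{i\in I}e_i$ in $RH$, refines it to a full decomposition of $1$ in $A$, and then invokes the lifting theorems \cite[(3.1),(3.2)]{Thevenaz} to match conjugacy classes, whereas you work directly with the $r$ representatives and close the argument by your Nakayama dichotomy together with a count of simple $A/T$-modules---both are standard and equally efficient.
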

\begin{proof}
$T$ is an ideal of $A$ and $A = RH + T$. Moreover for $a \in RH$ we have $\operatorname{Tr}_{\langle x^p \rangle}^{\langle x \rangle}(a) = pa$, so $RH \cap T = pRH$. Thus we obtain
\[A/T = (RH+T)/T \cong RH/(RH \cap T) \cong RH/pRH \cong (R/pR)H. \]
Set $\bar{A} = A/T$. Let $1 = \sum_{i \in I} e_i$ be a primitive orthogonal idempotent decomposition in $RH$ for a certain index set $I$. By the assumption on $R$ a lifting theorem  is available \cite[(3.1) Theorem]{Thevenaz} showing that $1 = \sum_{i \in I} \bar{e}_i$ is a primitive orthogonal idempotent decomposition in $\bar{A} = (R/pR)H$. For each $i \in I$ let $e_i = \sum_{j \in J_i} f_{i,j}$ be a primitive orthogonal idempotent decomposition in $A$ for certain $J_i$. Then $1 = \sum_{i \in I, j \in J_i} f_{i,j}$ is a primitive orthogonal idempotent decomposition in $A$. Every primitive idempotent in $A$ is conjugate to one of the $f_{i,j}$. 

Fix some $i \in I$. Since $\bar{e}_i = \sum_{j \in J_i} \bar{f}_{i,j}$ is primitive all $\bar{f}_{i,j}$ are $0$ except one, say $\bar{f}_{i,j_0}$. In particular $f_{i,j}$ lies in $T$ for $j \neq j_0$. Set $\varepsilon = \sum_{j \in J_i, j \neq j_0} f_{i,j}$, so $\varepsilon$ is an idempotent in $T$ orthogonal to $f_{i,j_0}$. Then $e_i - f_{i,j_0} = \varepsilon$. Finally if $f$ is another primitive idempotent in $A$ such that $e_i - f$ lies in $T$ then $f$ and $f_{i,j_0}$ are conjugate by \cite[(3.2) Theorem (d)]{Thevenaz}.
\end{proof}

\textbf{Remark:} In the proof we used \cite[Theorems 3.1, 3.2]{Thevenaz} where it is assumed that the residue class field of $R$ is algebraically closed. This is however not needed in the proof of these theorems, so we can apply them in our setting.\\ 

We will also need the following variation of Green's indecomposability theorem. Denote by $Z(G)$ the centre of $G$.
\begin{lemma}\label{GreenInd}
Let $R$ be a $p$-adic ring. Let $H$ be a subgroup of $G$ of $p$-power index such that $G = HZ(G)$. If $V$ is an indecomposable $RH$-module then $\operatorname{ind}_H^GV$ is an indecomposable $RG$-module.
\end{lemma}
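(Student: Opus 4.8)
The plan is to show that $\operatorname{End}_{RG}(\operatorname{ind}_H^G V)$ is a local ring; since this is a module-finite algebra over the complete local ring $R$, a local endomorphism ring forces $\operatorname{ind}_H^G V$ to be indecomposable. First I would use the hypotheses to normalise the setup. As $[G:H]$ is a power of $p$ and $G=HZ(G)$, the quotient $Q:=G/H\cong Z(G)/(Z(G)\cap H)$ is a finite abelian $p$-group; since the $p'$-part of $Z(G)$ maps to $1$ in the $p$-group $Q$, writing $P$ for the Sylow $p$-subgroup of $Z(G)$ we get $Z(G)=P\,(Z(G)\cap H)$ and hence $G=HP$, so we may assume $Z(G)$ is itself a $p$-group. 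Put $E:=\operatorname{End}_{RH}(V)$, which is local because $V$ is indecomposable over the module-finite $R$-algebra $RH$, and set $D:=E/\operatorname{rad}(E)$; then $D$ is a division ring and a $k$-algebra, $k$ the residue field of $R$, so $\operatorname{char}D=p$.

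Next I would compute $\operatorname{End}_{RG}(\operatorname{ind}_H^G V)$. Choose a transversal $\{\widetilde q\}_{q\in Q}$ of $H$ in $G$ lying inside $Z(G)$. Because the $\widetilde q$ are central, Mackey's formula gives $\operatorname{res}^G_H\operatorname{ind}_H^G V\cong\bigoplus_{q\in Q}\widetilde q\otimes V$ with each summand $\widetilde q\otimes V\cong V$ as $RH$-modules (conjugation by a central element is trivial), and Frobenius reciprocity then identifies $\operatorname{End}_{RG}(\operatorname{ind}_H^G V)$ with $\bigoplus_{q\in Q}E$ as an $R$-module. Tracking the ring structure — i.e.\ imposing $RG$-linearity rather than mere $RH$-linearity — shows this is the twisted group algebra $E^{\alpha}[Q]$ over $E$, where $Q$ acts trivially on $E$ and the $2$-cocycle is $\alpha(x,y)=\bigl(\,\widetilde{xy}^{\,-1}\widetilde x\,\widetilde y\;\cdot\;\bigr)$, the action on $V$ of the element $\widetilde{xy}^{\,-1}\widetilde x\,\widetilde y\in Z(G)\cap H$. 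Since this element is central in $G$ and of $p$-power order, its action lies in $Z(E)^{\times}$ and has $p$-power order.

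Finally I would prove $B:=E^{\alpha}[Q]$ is local. As $B$ is module-finite over complete local $R$, $B$ is local iff $B/\mathfrak m B$ is, where $\mathfrak m=\operatorname{rad}R$; and $B/\mathfrak m B=\overline E^{\,\overline\alpha}[Q]$ with $\overline E:=E/\mathfrak m E$ a finite-dimensional local $k$-algebra whose radical is nilpotent and with $\overline E/\operatorname{rad}(\overline E)\cong D$. Killing the nilpotent two-sided ideal $\operatorname{rad}(\overline E)\cdot\overline E^{\,\overline\alpha}[Q]$ leaves $D^{\overline\alpha}[Q]$, whose cocycle now takes values among the $p$-power-order elements of $Z(D)^{\times}$; but in characteristic $p$ the only such element is $1$ (if $x^{p^n}=1$ then $(x-1)^{p^n}=0$, so $x=1$ in the field $Z(D)$). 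Hence $D^{\overline\alpha}[Q]=D[Q]$, the group algebra of the finite $p$-group $Q$ over a division ring of characteristic $p$, which is local (its augmentation ideal is nilpotent with quotient $D$). Therefore $B$ is local and $\operatorname{ind}_H^G V$ is indecomposable.

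The hard part is the middle step: producing the identification of $\operatorname{End}_{RG}(\operatorname{ind}_H^G V)$ with a twisted group algebra over $\operatorname{End}_{RH}(V)$ cleanly, and recognising that the hypothesis $G=HZ(G)$ is exactly what makes the twisting cocycle \emph{central-valued} — so that reducing modulo $\operatorname{rad}(E)$ takes it into the commutative group $Z(D)^{\times}$ — and of $p$-power order once $Z(G)$ is taken to be a $p$-group. After that the argument is the elementary collapse of the cocycle plus the standard locality of the modular group algebra of a $p$-group. A more hands-on variant avoids twisted group algebras: filter $H\trianglelefteq G$ by a chain with successive central quotients of order $p$ (possible since $Q$ is a finite abelian $p$-group lifting into $Z(G)$), reducing to $[G:H]=p$ with $G=H\langle z\rangle$, $z$ central of $p$-power order, where one checks directly that $\operatorname{End}_{RG}(\operatorname{ind}_H^G V)\cong E[t]/(t^{p}-\zeta)$ with $t$ central and $\zeta$ the unit action of $z^{p}$, and then observes $\zeta\equiv 1$ modulo $\operatorname{rad}(E)$ so that this algebra is local.
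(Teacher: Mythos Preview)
Your argument is correct, and the ``hands-on variant'' you sketch at the end is essentially the proof the paper gives: the paper reduces by induction to $[G:H]=p$, picks a central $p$-element $g$ with $g^{p}\in H$, and shows (via Frobenius reciprocity) that $A=\operatorname{End}_{RG}(\operatorname{ind}_H^G V)$ decomposes as $\bigoplus_{i=0}^{p-1}(\bullet g)^{i}B$ over $B=\operatorname{End}_{RH}(V)$, where $\bullet g$ is the central endomorphism ``right multiplication by $g$''; then modulo $J(B)A$ the element $\bullet g-1$ is central nilpotent in a characteristic-$p$ ring, and the resulting quotient is a quotient of the division ring $B/J(B)$, so $A$ is local. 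Your main approach packages the same mechanism more structurally: you treat all of $Q=G/H$ at once by identifying $A$ with a twisted group algebra $E^{\alpha}[Q]$ whose cocycle is central-valued and of $p$-power order, so that over $D=E/\operatorname{rad}(E)$ it collapses to the ordinary group algebra $D[Q]$ of a $p$-group over a characteristic-$p$ division ring. This buys a cleaner conceptual picture and avoids the induction, at the cost of invoking the twisted-group-algebra formalism; the paper's version (following Dade) is more elementary but is exactly your final paragraph. One small wording issue: ``we may assume $Z(G)$ is itself a $p$-group'' is not literally what you use --- what you actually need, and what the equality $G=HP$ gives, is that the transversal $\{\widetilde q\}$ can be chosen inside the Sylow $p$-subgroup $P$ of $Z(G)$, so the cocycle values lie in $P\cap H$ and hence act as central units of $p$-power order.
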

\begin{proof}
Arguing by induction we can assume $[G:H] = p$. Set $W = \operatorname{ind}_H^G V$ and denote $B = \operatorname{End}_{RH}(V)$ and $A = \operatorname{End}_{RG}(W)$. We will imitate the arguments of \cite[Proposition 12.10]{Dade}. We view $B$ as an $R$-subalgebra of $A$. Let $g \in Z(G)$ be a $p$-element such that $g^p \in H$. Denote by $\bullet g$ the endomorphism of $W$ mapping $w$ to $wg$. Then $\bullet g$ is a central element of $A$ and by Frobenius reciprocity we have 
\[A \cong B \oplus (\bullet g) B \oplus ... \oplus  (\bullet g)^{p-1} B  \]
as $R$-modules. In particular $A$ is generated by $B$ and $\bullet g$. Denote by $J(B)$ and $J(A)$ the radical of $B$ and $A$ respectively. Denoting by $\overline{\bullet g}$ the image of $\bullet g$ in $\overline{A} = A/J(B)A$ we find that $(\overline{\bullet g} - 1)$ generates a nilpotent ideal in $\overline{A}$, since the latter is a ring of characteristic $p$ and $\overline{\bullet g}$ of $p$-power order. Setting $\overline{B} = B/(B \cap J(B)A)$ we thus get an isomorphism
\[\overline{A}/(\overline{\bullet g} -1)\overline{A} \cong \overline{B}/(\overline{B} \cap (\overline{\bullet g} -1)\overline{A}). \]
By the indecomposability of $V$ we know that $B/J(B)$ is a division ring and thus so is its quotient $\overline{B}/(\overline{B} \cap (\overline{\bullet g} -1)\overline{A})$. Hence so is $A/J(A) \cong \overline{A}/(\overline{\bullet g} -1)\overline{A}$. So $A$ is local and $W$ is indecomposable.  
\end{proof}

We are now ready to prove the main lemma of this paragraph connecting all the concepts developed so far. For a summand $V$ of the double-action module ${}_G(RG)_u$ associated to a torsion unit $u$ in $RG$ we will write sometimes ${}_G(V)_u$.
\begin{lemma}\label{RelProjEquiv}
Let $R$ be a $p$-adic ring, $x \in G$ a non-trivial $p$-element and $e$ in $(RG)^{\langle x \rangle}$ a primitive idempotent. View ${}_G(RG)_x$ as an $R(G \times C)$-module where $C$ is a cyclic group of the same order as $x$. The following are equivalent.
\begin{itemize}
\item[a)] ${}_G(RGe)_x$ is projective relative to $G \times C^p$.
\item[b)] There exists a primitive idempotent $\nu \in (RG)^{\langle x^p \rangle}$ such that
\[e = \operatorname{Tr}_{\langle x^p \rangle}^{\langle x \rangle}(\nu) = \nu + \nu^x + ... + \nu^{x^{p-1}} \]
is an orthogonal decomposition of $e$.
\item[c)] $e \in  \operatorname{Tr}_{\langle x^p \rangle}^{\langle x \rangle}(RG)$.
\item[d)] $ex \in [RG, RG]$. 
\end{itemize}
\end{lemma}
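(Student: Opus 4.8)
The plan is to prove the cycle of implications a) $\Rightarrow$ c) $\Rightarrow$ b) $\Rightarrow$ a), and then a separate equivalence c) $\Leftrightarrow$ d) which is essentially the linear-algebra heart of the statement. For a) $\Rightarrow$ c): relative projectivity with respect to $G \times C^p$ means that the identity endomorphism of ${}_G(RGe)_x$ factors through a $G \times C^p$-projective module, equivalently (by the higher Mackey/relative trace description of relative projectivity) that $\operatorname{id}_{RGe}$ lies in the image of the relative trace map $\operatorname{Tr}_{G \times C^p}^{G \times C}$ on endomorphisms. Translating through Lemma~\ref{IdemRG}c), which identifies $\operatorname{End}_{R(G\times C)}(RGe)$ with $e(RG)^{\langle x\rangle}e$ and $\operatorname{End}_{R(G\times C^p)}(RGe)$ with $e(RG)^{\langle x^p\rangle}e$, the relative trace on endomorphism rings becomes $\operatorname{Tr}_{\langle x^p\rangle}^{\langle x\rangle}$ acting on the subring $e(RG)^{\langle x^p\rangle}e$; so $e = \operatorname{id}$ being in the image gives $e = \operatorname{Tr}_{\langle x^p\rangle}^{\langle x\rangle}(a)$ for some $a \in e(RG)^{\langle x^p\rangle}e \subseteq (RG)^{\langle x^p\rangle}$, which is c). For c) $\Rightarrow$ b): this is exactly Lemma~\ref{IdemDecAT} applied with the roles of the primitive idempotent. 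Write $A = (RG)^{\langle x\rangle}$, $T = \operatorname{Tr}_{\langle x^p\rangle}^{\langle x\rangle}(RG)$; since $e \in T$ and $e$ is primitive in $A$, Lemma~\ref{IdemDecAT} (more precisely the construction in its proof, or an argument that in $A/T \cong (R/pR)H$ the image $\bar e$ must be zero, hence $e$ is conjugate in $A$ to one of the $f_i$ with $i > r$, which lie in $T$ and decompose as traces) forces $e$ to be of the form $\operatorname{Tr}_{\langle x^p\rangle}^{\langle x\rangle}(\nu)$ for a primitive idempotent $\nu$ of $(RG)^{\langle x^p\rangle}$, with the sum orthogonal because $\nu$ and its $\langle x\rangle$-conjugates are pairwise orthogonal — this last point needs that $\langle x^p\rangle$ acts trivially on $\nu$ so that the $p$ distinct cosets of $\langle x^p\rangle$ in $\langle x\rangle$ give $p$ conjugates, and that their sum being an idempotent forces orthogonality. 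For b) $\Rightarrow$ a): if $e = \operatorname{Tr}_{\langle x^p\rangle}^{\langle x\rangle}(\nu)$ orthogonally, then $RGe = \bigoplus_{j=0}^{p-1} RG\nu^{x^j}$ as $RG$-modules, and one checks that ${}_G(RGe)_x \cong \operatorname{ind}_{G\times C^p}^{G\times C}\big({}_G(RG\nu)_x\big)$ — indeed $\nu \in (RG)^{\langle x^p\rangle}$ means $RG\nu$ is $G \times C^p$-stable inside $M$, and the $C$-action shuffles the $p$ summands transitively — whence ${}_G(RGe)_x$ is induced from a $G\times C^p$-module and is therefore $G \times C^p$-projective.

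It remains to establish c) $\Leftrightarrow$ d), i.e. $e \in T$ if and only if $ex \in [RG,RG]$, equivalently $\varepsilon_{g^G}(ex) = 0$ for all $g \in G$. For the direction c) $\Rightarrow$ d): if $e = \operatorname{Tr}_{\langle x^p\rangle}^{\langle x\rangle}(a) = \sum_{j=0}^{p-1} a^{x^j}$ with $a \in (RG)^{\langle x^p\rangle}$, then $ex = \sum_j a^{x^j} x = \sum_j (ax)^{x^j} \cdot (\text{correction})$ — one has to be a little careful: $a^{x^j}x = x^{-j}axx^j = (ax)^{x^j}$ since $x$ is central in $\langle x\rangle$, so $ex = \operatorname{Tr}_{\langle x^p\rangle}^{\langle x\rangle}(ax)$ with $ax \in (RG)^{\langle x^p\rangle}$ (because $x$ centralizes $x^p$); and a relative trace $\operatorname{Tr}_{\langle x^p\rangle}^{\langle x\rangle}(b)$ always lies in $[RG,RG]$ when the index is $p$ and $b$ is $\langle x^p\rangle$-fixed — this is the standard fact that for a $p$-subgroup $P \le Q$ with $[Q:P]=p$ and $b$ $P$-fixed, $\operatorname{Tr}_P^Q(b) = \sum_{j} b^{x^j} = \sum_j [x^{-j}, bx^j] + p b/p\cdots$; more cleanly, $b + b^x + \cdots + b^{x^{p-1}}$ has all partial augmentations equal to $\varepsilon_{g^G}$ of a conjugate sum, and since conjugation by $x$ permutes support while fixing $G$-conjugacy classes, each partial augmentation of the trace is $p\cdot\varepsilon_{g^G}(b)$ — that is not obviously zero, so I will instead argue via characters: $\theta_e(x) = |C_G(x)|\varepsilon_{x^G}(e)$ by \eqref{theta_e}, and $ex \in [RG,RG]$ is detected by $\varepsilon_{g^G}(ex)$, which I relate to the Brauer character of the projective module $RGe$ restricted to $\langle x\rangle$. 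This is where the argument is most delicate. The cleanest route is: $ex \in [RG,RG]$ $\iff$ $\theta'(g) := |C_G(g)|^{-1}\theta_{?}$... — better, use that $RGe$ is a projective $RG$-lattice with character $\theta_e$, $\theta_e(g) = |C_G(g)|\varepsilon_{g^G}(e)$, and compute $\theta_e$ on the cyclic group $\langle x \rangle$: the condition $ex \in [RG,RG]$ says $\varepsilon_{g^G}(ex)=0$ for all $g$, and summing/expanding, this is equivalent to $\sum_{g} \varepsilon_{g^G}(e)\cdot(\text{something})$; I expect the right statement is that $ex\in[RG,RG]$ iff the restriction of $\theta_e$ to $\langle x\rangle$ is a multiple of the regular character pulled back from $\langle x\rangle/\langle x^p\rangle$... no. Let me fall back to the mechanism actually available: use Lemma~\ref{PartAugGOrH} and Corollary~\ref{MultIdem} with $H = C_G(x)$. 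Since $e \in (RG)^{\langle x\rangle}$ is primitive, it is conjugate in $A$ to one of the $f_i$ of Lemma~\ref{IdemDecAT}; by Lemma~\ref{IdemDecAT} either $e$ "comes from" a primitive idempotent $e_i$ of $RH$ (the case $i \le r$, equivalently $e \notin T$) or $e \in T$ (the case $i > r$). So c) fails $\iff$ $e$ is $A$-conjugate to some $f_i$ with $i \le r$ $\iff$ $e - e_i \in T$ for a primitive idempotent $e_i$ of $RH$. In that case, writing $e_i \in R[H_{p'}] + [RH,RH]$ by Proposition~\ref{SuppIdem} and using Lemma~\ref{PartAugGOrH}, one gets $\varepsilon_{x^G}(e_i x) = \varepsilon_{1^H}(e_i) \ne 0$ since $e_i$ is a nonzero idempotent of $RH$ (its $\varepsilon_1$ is a positive rational, being a sum of ordinary character degrees over $|H|$ times a multiplicity), whereas if $e \in T$ the first direction shows $\varepsilon_{g^G}(ex)=0$ for all $g$; and $e - e_i \in T$ with $e_i - \operatorname{(its conjugate)}$ adjustments shows $\varepsilon_{x^G}(ex) = \varepsilon_{x^G}(e_i x) \ne 0$. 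Thus d) fails when c) fails, completing the contrapositive of d) $\Rightarrow$ c); combined with c) $\Rightarrow$ d) this gives the equivalence.

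The main obstacle I anticipate is making the c) $\Leftrightarrow$ d) equivalence fully rigorous, specifically pinning down that $\varepsilon_{x^G}(ex)$ is nonzero precisely when $e$ is $A$-conjugate to an idempotent lifted from $RH = RC_G(x)$. This requires combining the idempotent-lifting dictionary of Lemma~\ref{IdemDecAT} (relating primitive idempotents of $(RG)^{\langle x\rangle}$ modulo $T$ with those of $(R/pR)C_G(x)$) with the partial-augmentation bookkeeping of Lemma~\ref{PartAugGOrH} and the nonvanishing of $\varepsilon_1$ of a nonzero idempotent in a $p$-adic group ring of characteristic zero — the latter because for such a ring $\varepsilon_1(e) = \frac{1}{|H|}\sum_{\chi}\theta_e$-type formula gives a genuinely positive number, so it cannot vanish. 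The implications a) $\Leftrightarrow$ b) $\Leftrightarrow$ c) are comparatively routine once one unwinds the endomorphism-ring identification of Lemma~\ref{IdemRG}c) and the induced-module description underlying Green's theorem (Lemma~\ref{GreenInd}); the relative-projectivity bookkeeping via relative trace maps on endomorphism rings is standard (Thévenaz) and only needs the translation $\operatorname{End}_{G\times C}(RGe) = e(RG)^{\langle x\rangle}e$, $\operatorname{End}_{G\times C^p}(RGe) = e(RG)^{\langle x^p\rangle}e$ together with the fact that $\operatorname{Tr}_{\langle x^p\rangle}^{\langle x\rangle}$ on these endomorphism rings agrees with the abstract relative trace $\operatorname{Tr}_{G\times C^p}^{G\times C}$.
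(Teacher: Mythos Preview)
Your cycle a) $\Leftrightarrow$ c) via Higman's criterion and the endomorphism identification is fine, and your b) $\Rightarrow$ a) (induced modules are relatively projective) and d) $\Rightarrow$ c) contrapositive via Lemma~\ref{IdemDecAT} are essentially the paper's arguments. The real gap is your step c) $\Rightarrow$ b). Lemma~\ref{IdemDecAT} only tells you that a primitive idempotent $e$ of $A=(RG)^{\langle x\rangle}$ lying in $T$ is conjugate to some $f_i$ with $i>r$; it does \emph{not} say that such an $f_i$ admits an orthogonal decomposition $f_i=\operatorname{Tr}_{\langle x^p\rangle}^{\langle x\rangle}(\nu)$ with $\nu$ primitive in $(RG)^{\langle x^p\rangle}$. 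Your remark that ``their sum being an idempotent forces orthogonality'' is not true in general, and writing $e=\operatorname{Tr}(a)$ for some $a$ gives no control over primitivity or orthogonality of the conjugates $a^{x^j}$.

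The paper closes this gap by proving a) $\Rightarrow$ b) directly, and the key tool is Green indecomposability (Lemma~\ref{GreenInd}), which you mention only in passing. From relative projectivity one gets that $M={}_G(RGe)_x$ is a summand of $\operatorname{ind}_{G\times C^p}^{G\times C}V$ for some indecomposable $V$; Lemma~\ref{GreenInd} forces this induced module to be indecomposable, hence $M\cong\operatorname{ind}_{G\times C^p}^{G\times C}V$, and then the decomposition $M=V_0\oplus\cdots\oplus V_{p-1}$ with $x$ cyclically permuting the $V_j$ yields the orthogonal primitive idempotents $\nu_j=\nu^{x^j}$. This same gap undermines your c) $\Rightarrow$ d): your attempted direct computation $ex=\operatorname{Tr}_{\langle x^p\rangle}^{\langle x\rangle}(ax)$ does not land in $[RG,RG]$ (indeed $\varepsilon_{g^G}(\operatorname{Tr}(b))=p\,\varepsilon_{g^G}(b)$, which need not vanish), and the clean character argument you gesture at---$\theta_e$ induced from $G\times C^p$ hence vanishing on $(g,c)$---requires precisely the orthogonal decomposition of b), which is the paper's route b) $\Rightarrow$ d). So the missing ingredient throughout is Lemma~\ref{GreenInd}; once you insert a) $\Rightarrow$ b) via Green, the rest of your outline matches the paper.
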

\begin{proof}
Set $M = {}_G(RGe)_x$ and let $C = \langle c \rangle$.

a) $\Rightarrow$ b): By \cite[(19.2) Theorem (iv)]{CR} $M$ is a direct summand of the module	 $\operatorname{ind}_{G \times C^p}^{G \times C} \operatorname{res}_{G \times C^p}^{G\times C} M$. So, since $M$ is indecomposable, there exists a direct summand $V$ of $\operatorname{res}_{G \times C^p}^{G\times C} M$ such that $M$ is a direct summand of $\operatorname{ind}_{G \times C^p}^{G\times C}V$. But $\operatorname{ind}_{G \times C^p}^{G\times C}V$ is indecomposable by Lemma~\ref{GreenInd} and so
\[M \cong \operatorname{ind}_{G \times C^p}^{G\times C}V = (V \otimes 1) + (V \otimes c) + ... + (V \otimes c^{p-1}). \]
Hence as $R$-modules we have
\[M = V_0 \oplus V_1 \oplus ... \oplus V_{p-1} \]
where each $V_i$ is a direct summand of $\operatorname{res}_{G \times C^p}^{G\times C} M$ isomorphic to $V$ and $V_ix = V_i \cdot c = V_{i+1}$ for all $0\leq i \leq p-1$. Here indices are understood to be taken modulo $p$. i.e. $V_p = V_0$. Let
\[e = \nu_0 + \nu_1 +... + \nu_{p-1} \]
with $\nu_i \in V_i$. Then the $\nu_i$ are primitive and pairwise orthogonal idempotents in $(RG)^{\langle x^p \rangle}$ and $V_i = RG\nu_i$. Since $e$ is centralized by $x$ we obtain
\[e = e^x = \nu_0^x + \nu_1^x +... + \nu_{p-1}^x = \nu_{p-1}^x + \nu_0^x +... + \nu_{p-2}^x \]
where $\nu_i^x \in V_{i+1}$. So $\nu_i^x = \nu_{i+1}$ for $0 \leq i \leq p-1$ and b) holds with $\nu = \nu_0$.

b) $\Rightarrow$ c) is clear.

c) $\Rightarrow$ a): Let $a \in (RG)^{\langle x^p \rangle}$ such that $e = \operatorname{Tr}_{\langle x^p \rangle}^{\langle x \rangle}(a)$. Since $e$ is centralized by $x$ we have
\[e = e \operatorname{Tr}_{\langle x^p \rangle}^{\langle x \rangle}(a) e = \operatorname{Tr}_{\langle x^p \rangle}^{\langle x \rangle}(eae). \]
For an element $m \in M$ denote by $\bullet m$ the map from $M$ to $M$ given by right multiplication with $m$. Then $\bullet eae$ is an endomorphism of $\operatorname{res}_{G \times C^p}^{G\times C} M$. Moreover $\bullet e$ is the identity map on $M$ and 
\[\bullet e  = \bullet \operatorname{Tr}_{\langle x^p \rangle}^{\langle x \rangle}(eae) = \sum_{i =0}^{p-1} \bullet(eae)^{x^i} = \sum_{i=0}^{p-1} c^{-i}(\bullet eae)c^i. \]
So $M$ is projective relative to $G \times C^p$ by Higman's criterion \cite[(19.2) Theorem (iii)]{CR}.

b) $\Rightarrow$ d): By assumption as $R$-modules we have $RGe = \oplus_{i = 0}^{p-1} RG\nu^{x^i}$. So the character $\theta_e$ corresponding to the $R(G \times C)$-module $M$ is induced from a character of an $R(G \times C^p)$-module, namely ${}_G(RG \nu)_{x^p}$. Since for any $g \in G$ the element $(g,c) \in G \times C$ is not conjugate to any element in $G \times C^p$ this implies $\theta_e((g,c)) = 0$. But by \eqref{theta_egh} $\theta_e((g,c)) = |C_G(g)|\varepsilon_{g^G}(ex)$. So $\varepsilon_{g^G}(ex) = 0$ for all $g \in G$ and this is equivalent to $ex \in [RG,RG]$.

d) $\Rightarrow$ c): By assumption we have $\varepsilon_{g^G}(ex) = 0$ for all $g \in G$. We will argue by contradiction, so assume $e \notin \operatorname{Tr}_{\langle x^p \rangle}^{\langle x \rangle}(RG)$. Setting $H = C_G(x)$ we know by Lemma~\ref{IdemDecAT} that there exist a primitive idempotent $e_1$ in $RH$ and an idempotent $\varepsilon$ in $\operatorname{Tr}_{\langle x^p \rangle}^{\langle x \rangle}(RG)$ such that $e = e_1 - \varepsilon$. Moreover $\varepsilon$ is the sum of primitive idempotents in $\operatorname{Tr}_{\langle x^p \rangle}^{\langle x \rangle}(RG)$. So by applying the implication c) $\Rightarrow$ b) $\Rightarrow$ d) to each primitive summand of $\varepsilon$ we obtain $\varepsilon_{g^G}(\varepsilon x) = 0$ for all $g \in G$. Moreover there exists some $h \in H$ such that $\varepsilon_{h^H}(e_1) \neq 0$. Since $x$ in central in $H$ this implies $\varepsilon_{(hx)^H}(e_1x) \neq 0$ and so $\varepsilon_{(hx)^G}(e_1x) \neq 0$ by Lemma~\ref{PartAugGOrH}. Altogether we get
\[\varepsilon_{(hx)^G}(e x) = \varepsilon_{(hx)^G}(e_1 x) - \varepsilon_{(hx)^G}(\varepsilon x) \neq 0, \]
contradicting our assumption.
\end{proof}

We will record one more fact following from the proof of a) $\Rightarrow$ b) in the preceding lemma.
\begin{lemma}\label{lastlemma}
Let $R$ be a $p$-adic ring, $x \in G$ a non-trivial $p$-element and $e \in \operatorname{Tr}_{C^p}^{C}(RG)$ a primitive idempotent. If $V$ is an indecomposable direct summand of $\operatorname{res}_{G \times C^p}^{G \times C}{}_G(RGe)_x$ then ${}_G(RGe)_x \cong \operatorname{ind}_{G \times C^p}^{G \times C} V$.
\end{lemma}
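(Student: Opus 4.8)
The plan is to obtain this as an immediate consequence of Lemma~\ref{RelProjEquiv} and, more precisely, of the explicit decomposition produced inside the proof of its implication a) $\Rightarrow$ b). Throughout write $M = {}_G(RGe)_x$ and $C = \langle c \rangle$, a cyclic group of the order of $x$.

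First I would record that $M$ is indecomposable. Indeed, since $e$ is a primitive idempotent of $(RG)^{\langle x \rangle}$ and $R$ is a $p$-adic ring, Lemma~\ref{IdemConj}~a) gives that $e(RG)^{\langle x \rangle}e$ is local, and by Lemma~\ref{IdemRG}~c) this ring is $\operatorname{End}_{R(G\times C)}(M)$; a module with local endomorphism ring is indecomposable. Moreover the hypothesis $e \in \operatorname{Tr}_{C^p}^{C}(RG)$ is condition c) of Lemma~\ref{RelProjEquiv} (where $\operatorname{Tr}_{C^p}^{C}(RG)$ stands for $\operatorname{Tr}_{\langle x^p \rangle}^{\langle x \rangle}(RG)$ via $\eta(c) = x$), so all four equivalent conditions of that lemma hold for $e$.

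Next I would unpack what the proof of a) $\Rightarrow$ b) actually yields: there is an indecomposable direct summand $V_0$ of $\operatorname{res}_{G\times C^p}^{G\times C}M$ and, as $R(G\times C^p)$-modules,
\[\operatorname{res}_{G\times C^p}^{G\times C}M \;\cong\; \underbrace{V_0 \oplus V_0 \oplus \dots \oplus V_0}_{p \text{ summands}},\]
while, using Higman's criterion together with the indecomposability of $M$ and of $\operatorname{ind}_{G\times C^p}^{G\times C}V_0$ (Lemma~\ref{GreenInd}), one also gets $M \cong \operatorname{ind}_{G\times C^p}^{G\times C}V_0$. The key point extracted here is that $\operatorname{res}_{G\times C^p}^{G\times C}M$ is a direct sum of copies of a single indecomposable lattice, so that all of its indecomposable summands are mutually isomorphic.

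It then remains to invoke the Krull--Schmidt theorem for $R(G\times C^p)$-lattices, available since $R$ is a complete noetherian local ring \cite[(30.6) Theorem]{CR}. If $V$ is an arbitrary indecomposable direct summand of $\operatorname{res}_{G\times C^p}^{G\times C}M$, then comparing $V$ with the displayed decomposition forces $V \cong V_0$ as $R(G\times C^p)$-modules; applying $\operatorname{ind}_{G\times C^p}^{G\times C}$ gives $\operatorname{ind}_{G\times C^p}^{G\times C}V \cong \operatorname{ind}_{G\times C^p}^{G\times C}V_0 \cong {}_G(RGe)_x$, as desired. The only step needing real attention is the previous paragraph, namely isolating from the proof of Lemma~\ref{RelProjEquiv} the fact that the restriction of $M$ is \emph{isotypic}, not merely relatively $G \times C^p$-projective; once that is in hand, everything else is Krull--Schmidt bookkeeping.
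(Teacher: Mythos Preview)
Your proposal is correct and follows exactly the route indicated in the paper, which simply records this lemma as ``one more fact following from the proof of a) $\Rightarrow$ b)'' in Lemma~\ref{RelProjEquiv}. You have made explicit precisely the two ingredients that proof supplies---that $M \cong \operatorname{ind}_{G\times C^p}^{G\times C}V_0$ and that $\operatorname{res}_{G\times C^p}^{G\times C}M$ is isotypic of type $V_0$---and then closed with Krull--Schmidt, which is all that is needed.
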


\section{Proof of Proposition~\ref{MainProp} and Theorem~\ref{MainTheorem} } 
\textit{Proof of Proposition~\ref{MainProp}:} If $u$ and $x$ are conjugate in $RG$ then the $R(G \times C)$-modules ${}_G(RG)_x$ and ${}_G(RG)_u$ are isomorphic. So assume that ${}_G(RG)_u$ is isomorphic to a direct summmand of a direct sum of copies of ${}_G(RG)_x$. Set $H = C_G(x)$ and $T = \operatorname{Tr}_{\langle x^p \rangle}^{\langle x \rangle}(RG)$. 

Let $f_1,...,f_r,...,f_s$ be conjugacy classes of primitive idempotents in $(RG)^{\langle x \rangle}$ as described in Lemma~\ref{IdemDecAT}. So $f_i \in T$ for $i > r$ and for $i \leq r$ we have an idempotent $e_i$ in $RH$ and an idempotent $\varepsilon_i$ in $T$ orthogonal to $f_i$ such that $e_i = f_i + \varepsilon_i$. Moreover the $e_1,...,e_r$ are representatives of the conjugacy classes of primitive idempotents in $RH$. 

The indecomposable summands of ${}_G(RG)_x$ are of the form $RGf_i$ by Lemma~\ref{IdemRG}. So by our assumption that ${}_G(RG)_u$ is a direct summand of a direct sum of copies of ${}_G(RG)_x$ there exist non-negative integers $k_1,...,k_s$ such that
\begin{align}\label{6.2}
{}_G(RG)_u \cong \oplus_{i=1}^s {{}_G(RGf_i)_x}^{k_i}.
\end{align}
Here all summands on the right hand side are indecomposable and different indices correspond to non-isomorphic modules by Lemma~\ref{IdemConj}.

Let $j_1,...,j_t$ be representatives of the conjugacy classes of primitive idempotents in $(RG)^{\langle u \rangle}$ and let $\ell_i$ be the multiplicity of $j_i$ in ${}_G(RG)_u$ for $1 \leq i \leq t$. So
\begin{align}\label{6.3}
{}_G(RG)_u \cong \oplus_{i=1}^t {{}_G(RGj_i)_u}^{\ell_i}.
\end{align}
Here also the summands on the right hand side are indecomposable and different indices correspond to non-isomorphic modules.

Let $I = \{i \ | \ 1\leq i \leq s, k_i \neq 0  \}$. Then comparing \eqref{6.2} and \eqref{6.3} we find by the Krull-Schmidt Theorem \cite[(30.6) Theorem]{CR} that there is a bijection $\sigma: I \rightarrow \{1,...,t\}$ such that
\begin{align}\label{6.3.5}
{}_G(RGf_i)_x \cong {}_G(RGj_{\sigma(i)})_u \ \ \text{and} \ \ k_i = \ell_{\sigma(i)}
\end{align}
for every $i \in I$. By Lemma~\ref{IdemConj} an isomorphism $\varphi_i: {}_G(RGf_i)_x \rightarrow {}_G(RGj_{\sigma(i)})_u$ is given by right multiplication with a unit $\mu_i$ in $RG$ and $f_i^{\mu_i} = j_{\sigma(i)}$. Since $\varphi_i$ is an isomorphism of $R(G \times C)$-modules we get
\[(f_ix)\mu_i = \varphi_i(f_i \cdot c) = \varphi_i(f_i) \cdot c = (f_i\mu_i)u = (\mu_i j_{\sigma(i)})u, \]
so $(f_ix)^{\mu_i} = j_{\sigma(i)}u$. 

Note that if $j$ and $j'$ are conjugate idempotents in $(RG)^{\langle u \rangle}$ we have $\varepsilon_{g^G}(ju) = \varepsilon_{g^G}(j'u)$ for any $g \in G$. So for $g \in G$ we have from \eqref{6.3.5} and \eqref{6.3}
\begin{align*}
\varepsilon_{g^G}(u) =& \varepsilon_{g^G}\left(\sum_{i=1}^t \ell_i j_i u\right) = \varepsilon_{g^G}\left(\sum_{i \in I} \ell_{\sigma(i)} j_{\sigma(i)} u \right) = \varepsilon_{g^G}\left(\sum_{i \in I} k_i (f_i x)^{\mu_i} \right) \\
 =& \varepsilon_{g^G}\left(\sum_{i \in I} k_i (f_i x) \right).
\end{align*} 
For $i > r$ we have $f_i \in T$, so c) in Lemma~\ref{RelProjEquiv} is satisfied, and by d) in Lemma~\ref{RelProjEquiv} we have $\varepsilon_{g^G}(f_ix) = 0$ for all $g \in G$. For the same reason for $i \leq r$ we have $\varepsilon_{g^G}(f_ix) = \varepsilon_{g^G}(e_ix - \varepsilon_ix) = \varepsilon_{g^G}(e_ix)$ for any $g \in G$. So for $g \in G$ we obtain
\begin{align}\label{6.3.6}
\varepsilon_{g^G}(u) = \varepsilon_{g^G} \left(\sum_{i \in I, \ i \leq r} k_ie_ix \right). 
\end{align}

Let $m_i$ be the multiplicity of $e_i$ in $RH$ for $1 \leq i \leq r$. Since the $e_i$ lie in $(RG)^{\langle x \rangle}$ the $R(G \times C)$-module $RGe_i$ is a direct summand of ${}_G(RG)_x$ by Lemma~\ref{IdemRG} and from $RG = RG \otimes_{RH} RH$  we obtain
\begin{align}\label{6.1}
{}_G(RG)_x \cong \oplus_{i = 1}^r {{}_G(RGe_i)_x}^{m_i}.
\end{align} 	
Note that the summands here are not necessarily indecomposable, since the $e_i$ might be not primitive in $(RG)^{\langle x \rangle}$. So for $g \in G$ we get
\begin{align}\label{6.6.7}
\varepsilon_{g^G} \left(\sum_{i=1}^r m_ie_ix \right) = \varepsilon_{g^G}(x) = \left\{\begin{array}{ll} 1, \ \text{if} \ g \in x^G \\ 0, \ \text{otherwise} \end{array} \right. . 
\end{align}
Since $u$ and $x$ are conjugate in $KG$ we have $\varepsilon_{g^G}(u) = \varepsilon_{g^G}(x)$ for all $g \in G$ by \cite[Lemma 2.5]{HertweckColloq}. So comparing \eqref{6.3.6} and \eqref{6.6.7} we get from Corollary~\ref{MultIdem} that $\{1,...,r\}$ is a subset of $I$ and $k_i = m_i$ for all $1 \leq i \leq r$.

Set 
\[V = \oplus_{i=1}^r {{}_G(RG\varepsilon_i)_x}^{m_i}  \ \ \text{and} \ \ W = \oplus_{i = r+1}^s {{}_G(RGf_i)_x}^{k_i}. \]
Since for any $1 \leq i \leq r$ the idempotents $f_i$ and $\varepsilon_i$ are orthogonal we have ${}_G(RGe_i)_x = {}_G(RGf_i)_x \oplus {}_G(RG\varepsilon_i)_x$. So from \eqref{6.2} and \eqref{6.1} we get
\begin{align}\label{6.4}
{}_G(RG)_u \oplus V &\cong \left(\oplus_{i = 1}^r {{}_G(RGf_i)_x}^{m_i} \right) \oplus V \oplus \left(\oplus_{i = r+1}^s {{}_G(RGf_i)_x}^{k_i} \right) \\
 &\cong {}_G(RG)_x \oplus W \nonumber. 
\end{align}
The units $u^p$ and $x^p$ are conjugate in $KG$ and if we restrict both sides in \eqref{6.2} to $G \times C^p$ we see that $u^p$ satisfies the assumptions of the Proposition. So arguing by induction on the order of $u$ we can assume that $u^p$ and $x^p$ are conjugate in the units of $RG$ or, in other words,
\[\operatorname{res}_{G \times C^p}^{G \times C} {}_G(RG)_u \cong \operatorname{res}_{G \times C^p}^{G \times C} {}_G(RG)_x. \]
So restricting both sides of \eqref{6.4} to $G \times C^p$ by Krull-Schmidt we get
\[\operatorname{res}_{G \times C^p}^{G \times C} V \cong \operatorname{res}_{G \times C^p}^{G \times C} W. \]
Now since $\varepsilon_i \in T$ and $f_i \in T$ for $i > r$ the isomorphism type of $V$ and $W$ is determined by their restrictions to $G \times C^p$ by Lemma~\ref{lastlemma}. So $V \cong W$ and cancelling $V$ and $W$ in \eqref{6.4} the Krull-Schmidt Theorem implies
\[{}_G(RG)_u \cong {}_G(RG)_x.\]
So $u$ and $x$ are conjugate in $RG$.\\

\textit{Proof of Theorem~\ref{MainTheorem}:}  Let $u$ be a torsion unit mapping to the identity under the natural homomorphism $\mathbb{Z}G \rightarrow \mathbb{Z}G/N$. Then $u$ is conjugate to an element $x \in N$ by a unit in $\mathbb{Q}G$ by \cite[Proposition 4.2]{HertweckColloq}. Note that this relies on the result of Weiss on permutation modules over $p$-adic rings \cite{Weiss88}. Then by \cite[Corollary 3.2]{HertweckEdinb} the $\mathbb{Z}_p(G \times C)$-module ${}_G(\mathbb{Z}_p G)_u$ is isomorphic to a direct summand of a direct sum of copies of ${}_G(\mathbb{Z}_p G)_x$. Hence $u$ and $x$ are conjugate in $\mathbb{Z}_pG$ by Proposition~\ref{MainProp}. Finally that $u$ and $x$ are then already conjugate by a unit in $\mathbb{Z}_{(G)}G$ can be seen from the arguments in \cite[§ 2]{HertweckFrobenius}. \\

\textbf{Acknowledgement:} I am thankful to \'A. del R\'io for helpful discussions and comments.  

\bibliographystyle{amsalpha}
\bibliography{pAdic.bib}

\end{document}